\def\ps@headings{%
\def\@oddhead{\mbox{}\scriptsize\rightmark \hfil \thepage}%
\def\@evenhead{\scriptsize\thepage \hfil \leftmark\mbox{}}%
\def\@oddfoot{}%
\def\@evenfoot{}}
\makeatother \pagestyle{headings}
\newtheorem{theorem}{Theorem}
\newtheorem{lemma}{Lemma}
\newtheorem{corollary}{Corollary}
\begin{document}
\title{Delay Optimal Multichannel Opportunistic Access
\thanks{\scriptsize $^*$S. Chen and L. Tong are with the School of Electrical and Computer
Engineering, Cornell University, Ithaca, NY 14853, USA. Email:
{\tt\{sc933@,ltong@ece.\}cornell.edu}.\hfill \break$^\dagger$Q. Zhao is with
the Department of Electrical and Computer Engineering, The
University of California, Davis, CA 95616. Email: {\tt
qzhao@ucdavis.edu}. \hfill \break
The work of S. Chen and L. Tong is
supported in part by the National Science Foundation under award CCF
1018115 and the Army Research Office MURI Program under award
W911NF-08-1-0238. The work of Q. Zhao is supported in part by the
Army Research Laboratory under Grant DAAD19-01-C-0062.}}
\author{Shiyao Chen$^*$, Lang Tong$^*$, and Qing Zhao$^\dagger$}

\maketitle

\begin{abstract}
The problem of minimizing queueing delay of opportunistic
access of multiple continuous time Markov channels is
considered.  A new access policy based on
myopic sensing and adaptive transmission (MS-AT)
is proposed.  Under the framework of risk sensitive constrained
Markov decision process with effective bandwidth as a measure of
queueing delay, it is shown that MS-AT achieves simultaneously throughput and delay optimality.
It is shown further that both the effective bandwidth and the throughput of MS-AT are
two-segment piece-wise linear functions of
the collision constraint (maximum allowable conditional collision probability)
with the effective bandwidth and throughput coinciding in the regime of
tight collision constraints.  Analytical and simulations comparisons with the
myopic sensing and memoryless transmission (MS-MT) policy which  is throughput
optimal but delay suboptimal in the regime of tight collision constraints.
\\[0.2em]
{\em Index terms}---Delay optimal medium access, effective bandwidth, opportunistic access, and constrained risk sensitive Markov decision process.
\end{abstract}

\section{Introduction}
\IEEEPARstart{W}{e} consider in this paper delay optimal policies for a secondary user in a multichannel hierarchical overlay cognitive network \cite{Zhao&Sadler:07SPM}, where a cognitive user can sense and transmit on one of the $N$ channels assigned to the primary users.  A secondary user should only transmit on a channel where the primary user is not transmitting, and  its transmissions are subject to interference constraints imposed by primary incumbents.

In such hierarchical cognitive networks, transmission opportunities for a secondary user  depend on random traffic patterns of the primary users, which makes it necessary that the secondary user has a queue that holds the arrival packets. Thus packets of a secondary transmitter are subjects to random delays. For applications with delay constraints, there is a need to find a sensing and transmission policies that minimize  queueing delays caused by random transmission opportunities and transmission failures due to collisions with the primary users.

Effective bandwidth (or effective capacity in the terminology used in
\cite{Wu:03,Ying:06,Shakkottai:TAC08}) is an indirect way to
measure the delay performance of a policy for applications either with
bursty random arrivals at or random departures from the transmitter queue; the latter may be caused by channel fading as in \cite{Wu:03,Ying:06} or random transmission opportunities considered in this paper.
To characterize the delay due to random arrivals or departures, Kelly
argued in \cite{Kelly:91} that ``\textit{constraints on the probability that buffer space or
delay exceeds a certain threshold is more important than constraints
on mean values.}" The idea is that a provider offering services
 should guarantee the subscriber a measure of  ``effective bandwidth" that takes into account randomness either in the arrival or the departure processes.
This viewpoint, implicitly espoused earlier in \cite{Hui:88JSAC}, led to the class of approaches based on the theory of large deviations.  See \cite{Elwalid&Mitra:93TON,Glynn:94JAP,Veciana&Walrand:95QS,Shwartz&Weiss:95book,Kelly:96} and references therein.

In the context of opportunistic secondary transmissions, the notion of effective bandwidth is defined as follows.
Given the buffer size $b\gg 1$ of the secondary transmitter and $\epsilon$, the
effective bandwidth is the maximum arrival rate  $a(\epsilon, b)$ of the secondary user traffic such that, despite the randomness in transmission opportunities and collisions with primary users, the probability of the queue length exceeding  $b$ is capped below $\epsilon$.  For applications with constant rate of arrivals, the queue size of a secondary user is proportional to packet delays. Therefore the effective bandwidth can be interpreted as the sustainable arrival rate with delay constraint.

On the surface, transmission delays caused random opportunities resembles that caused by fading, which has been studied in \cite{Wu:03,Ying:06,Shakkottai:TAC08}. There are, however, some important differences.  What we are interested in are {\em delay optimal policies} that choose which channel to sense and the way to transmit; the problem of finding an optimal policy was not the objective of \cite{Wu:03,Ying:06,Shakkottai:TAC08}. In addition, the collision constraints imposed in this paper do not have a correspondence in the problems studied earlier.

\subsection{Summary of Results and Contribution}

We consider $N$ identical and independent primary channels, modeled
by on-off continuous time Markov processes, with ``on" indicating
the channel is being used by the primary user and ``off" a transmission opportunity
for the secondary user. The secondary user adopts a slotted sensing-before-transmission protocol which
is defined by a sensing policy that determines which channel to sense and a transmission policy
that specifies the probability of transmission.

If the secondary user transmits on a particular channel, a collision with the primary user is possible because, as the licensed incumbent, the primary user transmits whenever it has a packet. We assume that the secondary user abides by the interference constraints imposed by the primary users.
Here the interference to a primary user is measured by the collision probability between the primary and secondary users conditioned on the event that the primary user transmits.
In this paper, the set of admissible policies includes those whose interference to primary channels is uniformly bounded above by  $\gamma$.

The main results of this paper are as follows. First, we propose a simple policy referred to
as MS-AT (myopic sensing and adaptive transmission). The sensing policy is the myopic policy originally
proposed by Zhao, Krishnamachari, and Liu
\cite{Zhao&Krishnamachari&Liu:08TWC} for slotted primary user systems.
The key idea of MS-AT is an adaptive transmission policy that adjust adaptively its transmission probabilities based on  feedbacks of past transmissions (ACK/NAK). In particular, given the maximum acceptable collision constraint $\gamma$,
the adaptive transmission policy sets a target rate $\tau(\gamma)$ of successful transmissions and adjust its transmission probabilities based on the estimated rate of successful transmissions.  We note that
the proposed MS-AT policy can be interpreted as a debt-based policy whose principle is  used  to
establish feasible throughput optimality in the deadline scheduling problem  \cite{Hou&Borkar&Kumar:INFOCOM09,Hou&Kumar:MobiHoc09,Hou&Kumar:INFOCOM10}, though the deadline scheduling  problem and the debt-based policy considered in \cite{Hou&Borkar&Kumar:INFOCOM09,Hou&Kumar:MobiHoc09,Hou&Kumar:INFOCOM10}
are very different from those in this paper.

Second, we establish  that MS-AT  simultaneously achieves delay and throughput optimality under any collision constraint. In establishing the optimality of MS-AT, the main idea is to formulate the problem of maximizing
effective bandwidth subject to collision constraints as one of risk sensitive Markov decision
process which adopts an exponential objective function (instead of the usual
linear objective function in the risk neutral Markov decision process).  In particular, in
the regime of loose collision constraints, the optimality of MS-AT is shown by the technique of
Hernandez and Marcus who established the multiplicative dynamic programming
representation of the unconstrained risk sensitive Markov decision process in
\cite{Hernandez&Marcus:96SCL}.

Third, we   provide simple characterizations of the
throughput and effective bandwidth as functions of the collision constraint parameter $\gamma$.  It
is shown that both the effective bandwidth and the throughput of MS-AT are
two-segment piece-wise linear functions of $\gamma$, and the effective bandwidth and throughput coincide
in the regime of tight collision constraints as illustrated in Fig.~\ref{fig:TputEB}.

\begin{figure}
\centering
\begin{psfrags}
\includegraphics[width=2.7in]{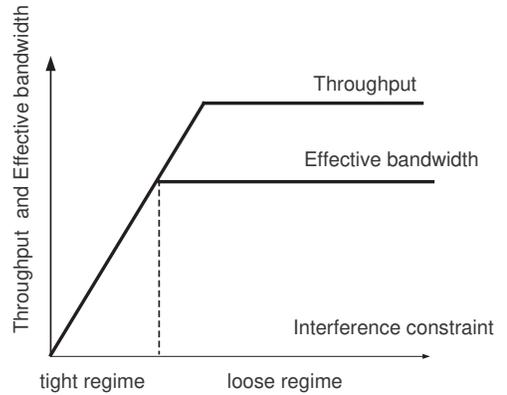}
\end{psfrags}
\caption{Illustration of throughput and effective bandwidth of MS-AT as piece-wise linear functions of collision constraints.} \label{fig:TputEB}
\end{figure}

The  result that the effective bandwidth matches to the throughput in the regime of tight collision constraints
seems surprising since the effective bandwidth is typically strictly smaller than the throughput \cite{Chang:00book}. A commonly accepted intuition is that the randomness in the input process requires more bandwidth, and randomness in the output process decreases the sustainable rate traffic if the delay is required to be small.
This intuition is consistent with our result when the collision constraints are loose as it can be observed in Section \ref{sec:simu} that under loose collision constraints the optimal effective bandwidth is strictly smaller than the optimal throughput. However, when the collision constraint is tight, the delay optimal policy offers effective bandwidth equal to the optimal throughput (see Theorem \ref{thm:tightthm}).
The key insight, as presented in Section \ref{sec:four}, is that the secondary user can make its packet departure process favorable by an adaptive transmission policy.

\subsection{Related Work}

Substantial amount of work exists for opportunistic access with
throughput as the performance measure. In
\cite{Zhao&Tong&Swami&Chen:JSAC07}, Zhao \textit{et. al.} consider
the case when primary users follow a slotted Markov transmission
structure under the per slot interference constraint. The myopic
sensing policy is shown to be throughput optimal for identical,
positively correlated Markovian channels
\cite{Zhao&Krishnamachari&Liu:08TWC,Ahmad&Liu&Javidi&Zhao&Krishnamachari:09IT}.
The continuous time Markovian occupancy model has also been treated
using the throughput measure (see, \textit{e.g.}, \cite{Geirhofer&Tong&Sadler:07COMMag,%
Geirhofer&Tong&Sadler:08MILCOM,%Zhao&Geirhofer&Tong&Sadler:07WCNC,%
Zhao&Geirhofer&Tong&Sadler:08SP}). Adopting a periodic channel
sensing policy, the optimal transmission policy is obtained in the
framework of constrained Markov decision process under the average interference
constraint
\cite{%Zhao&Geirhofer&Tong&Sadler:07WCNC,%
Zhao&Geirhofer&Tong&Sadler:08SP}, which further leads to the
throughput optimality of the periodic sensing with memoryless
transmission (PS-MT) policy when the collision constraints are tight\footnote{the tight collision constraint in \cite{Li&Zhao&Guan&Tong:10ICC,Li&Zhao&Guan&Tong:09JSAC} is different from the tight collision regime for MS-MT}
\cite{Li&Zhao&Guan&Tong:10ICC,Li&Zhao&Guan&Tong:09JSAC}.
The multiuser maximum throughput region of opportunistic access is
studied in \cite{Chen&Tong:JSAC} in which the throughput optimal policies employ memoryless transmission policy.

The use of effective bandwidth to analyze delay for opportunistic spectrum access was first considered in \cite{Laourine&Chen&Tong:10INFOCOM} where the authors obtain in closed
form the effective bandwidth of the myopic sensing policy with
slotted primary transmission, without establishing the optimality of
myopic sensing. In \cite{Chen&Tong:10ITA} both throughput and
effective bandwidth are considered for multi-user cognitive access.
The effective bandwidth optimality of myopic sensing for two
identical Markov channels under loose collision constraints is obtained in \cite{Chen&Tong:10ITA},
whereas our result in this paper establishes the optimality for a
general number of homogeneous channels under loose collision constraint as well as new optimal policy for general collision constraint.
In \cite{Wang&etal:INFOCOM10} the authors propose an ALOHA based
policy for multiuser opportunistic access and analyze its delay performance via a fluid model.

The optimization of effective bandwidth under tight collision
constraint can be formulated as constrained risk sensitive Markov decision. The only existing results to our
knowledge are \cite{Sladky:09,Kadota:06}, where \cite{Sladky:09} considers the set of Markov policies whereas \cite{Kadota:06}
treats the general constrained Markov decision process with the objective function and
the constraints given by general utility functions and establishes the existence of an optimal policy in the
set of general history dependent policies without any structural property or computation procedure of the optimal policy. In this paper, we exploit the special form of the utility function and the associated linear constraints, which leads to a structured optimal policy.

\section{Network Model}\label{sec:two}

The network model and the assumptions are described as follows. The
hierarchical overlay cognitive network consists of $N$ parallel
primary channels indexed by $1,\ldots,N$ and a secondary user. Each
primary user transmits on its dedicated channel, the occupancy of
which is assumed to be independent, identically distributed continuous time Markov process. The
state space of the channel is $\{1\mbox{(busy)},0\mbox{(idle)}\}$
and the holding times are exponentially distributed with mean
$\mu^{-1}$ and $\lambda^{-1}$ for busy and idle states,
respectively. The secondary user opportunistically accesses the
primary channels in a slotted sensing-before-transmission manner
with slot length $T$. In each time slot the secondary user senses
one out of the $N$ channels (sensing is assumed to be perfect) and
decides whether or not to transmit in the sensed channel. Successful
transmission of $c$ bits is achieved in slot $t$ if the secondary
user transmits in a channel that is idle throughout slot $t$.
In particular, if the secondary user senses a channel to be busy in slot $t$, no transmission will be attempted in this slot since no successful transmission but collision will be achieved.
We aim to maximize the effective bandwidth subject to the collision
constraints imposed by the primary users.

\subsection{Effective Bandwidth}

Effective bandwidth characterizes the quantity of service available
to the secondary user via the opportunistic channel with QoS
constraint. To give the definition of the effective bandwidth we
consider the queueing process of the secondary user.

Assume that the incoming traffic of the secondary user is a constant
arrival process with intensity of $a$ bits per slot, and the bits
are stored in a buffer of size $b\gg1$ before being transmitted. For
a fixed opportunistic access policy $\pi$, denote by $Q^\pi_{t}$ the queue length
at the end of slot $t$. Then $(Q^\pi_{t})_{t\geq0}$ satisfies the
following recursion
\begin{equation*}
Q^\pi_{t}=\max\{Q^\pi_{t-1}+a-R^\pi_{t},0\},~~t\geq1,~~Q^\pi_{0}=0.
\end{equation*}
where $(R^\pi_t)_{t\geq0}$ is the output process of the
opportunistic channels ($R^\pi_t=c$ if successful transmission is
achieved in slot $t$, and $R^\pi_t=0$ otherwise).
Under regularity conditions, Glynn and Whitt show in \cite{Glynn:94JAP} that, if the policy $\pi$ is such that the queue is stable, then $Q_n^\pi$ converges in distribution to a steady state distribution $Q_\infty^\pi$ with the decay rate
\begin{equation}\label{eq:decayrate}
\lim_{x\rightarrow\infty}\frac{1}{x}\log\Pr(Q^\pi_{\infty}>x)=-\theta^\pi(a),
\end{equation}
where $\theta^\pi(a)$ is the unique positive solution of the equation
\begin{equation}\label{eq:eq}
a\theta+\Psi_R^\pi(-\theta)=0,
\end{equation}
and $\Psi_R^\pi(\theta)$ is the Gartner-Ellis limit
\begin{equation*}
\Psi^\pi_R(\theta)=\lim_{n\rightarrow\infty}\frac{1}{n}\log \mathbb{E}_\pi[\exp(\theta
\sum_{k=1}^{n}R^\pi_{k})].
\end{equation*}

The decay rate $\theta^\pi(a)$ in Eq. (\ref{eq:decayrate})
depends on the secondary user's traffic arrival rate $a$ and the
output process $(R_t^\pi)_{t\geq0}$; the latter depends on the primary channel statistics and the opportunistic access
policy. With the decay rate in Eq. (\ref{eq:decayrate}) the packet drop (buffer overflow)
probability can be approximated by
\begin{equation}\label{eq:approx}
\Pr[Q^\pi_{\infty}>b]\approx\exp(-\theta^\pi(a)b),
\end{equation}
for large buffer size $b$.

With the constraint that the buffer overflow probability is capped by $\epsilon$ and adopting the large buffer approximation in Eq.
(\ref{eq:approx}), the effective bandwidth $a^\pi(\epsilon,b)$ can be
defined as
\begin{equation}\label{eqn:ebdef}
a^\pi(\epsilon,b)\triangleq\max\{a:
\exp(-b\theta^\pi(a))\leq\epsilon\}.
\end{equation}
It can then be shown (see \cite{Laourine&Chen&Tong:10INFOCOM}) that the
following effective bandwidth
formula holds.
\begin{equation}\label{eq:eb}
a^\pi(\epsilon,b)=\lim_{n\to\infty}\frac{\log\mathbb{E}_\pi\exp(\frac{\log(\epsilon)}{b}\sum_{t=1}^nR^\pi_t)}{n\frac{\log(\epsilon)}{b}}.
\end{equation}

The above expression shows a key connection between effective bandwidth and throughput. Applying the Jensen's inequality, we have
$a^\pi(\epsilon,b)\leq\lim_{n\to\infty}\frac{1}{n}\mathbb{E}_\pi\sum_{t=1}^nR^\pi_t$.
In general, the effective bandwidth is strictly smaller than the throughput, except in certain special scenarios (see Section \ref{sec:four}). Asymptotically as the
buffer size approaches infinity ($b\rightarrow \infty$) or the QoS
requirement is relaxed ($\epsilon\rightarrow1$), the effective bandwidth does converge to the throughput.

\subsection{Collision Constraints}\label{sec:collision}

The transmissions of the secondary user are subject to collision
constraints imposed by the primary users.
The collision is measured in a long term
average manner. Specifically, for each primary user $i$ the scaled infinite horizon
average collision with primary user $i$ must be bounded by $\gamma$; the scale infinite horizon average collision for primary user
$i$ is
\begin{equation*}
C_{\pi,i}=\frac{1}{1-v(0)\exp(-\lambda
T)}\lim_{n\to\infty}\frac{1}{n}\mathbb{E}\sum_{t=1}^n\chi_{i,t},
\end{equation*}
where $\chi_{i,t}$ is the indicator of collision with primary user $i$
in slot $t$, and the scale uses the reciprocal of
$1-v(0)\exp(-\lambda T)$ ($v(0)=\mu/(\lambda+\mu)$ is the stationary distribution of
idle state), \textit{i.e.}, the steady state probability of primary
user $i$ transmitting in a certain slot.
Given the collision parameter $\gamma$, the set of admissible policies
$\boldsymbol{\Pi}(\gamma)$ is given by the set of
policies that meet the interference constraints, \textit{i.e.},
$\boldsymbol{\Pi}(\gamma)=\{\pi:C_{\pi,i}\leq\gamma,\forall i\}$.
We aim to characterize the maximum effective bandwidth for the admissible policy set $\boldsymbol{\Pi}(\gamma)$.

\subsection{Opportunistic Access Policy}

The opportunistic access policy consists of two components: the
channel sensing policy which selects a channel to sense in each slot
based on the past history, and the transmission policy which
specifies the transmission probability upon idle sensing results. (Upon busy sensing results the secondary user will keep silent.)

We first describe the myopic
sensing policy (MS). In
\cite{Zhao&Krishnamachari&Liu:08TWC,Ahmad&Liu&Javidi&Zhao&Krishnamachari:09IT},
myopic sensing is shown to be throughput optimal for independent, identically distributed,
positively correlated, discrete time Markov channels with a simple round-robin
structure. Specifically, the secondary user first fixes an ordered
list of the $N$ channels. To start, the secondary user senses the
first channel in the list and keeps sensing it until the first busy
sensing result. Then the secondary user switches to the next channel
in the list and keeps sensing this channel. The secondary user
cycles through the channel list in this stay-when-idle,
switch-when-busy manner.

When it comes to the transmission policy, the
transmission probability is determined, in general, based on the
entire history.
A simple (but in general suboptimal) transmission policy is the memoryless transmission (MT) policies where the transmission decision depends only on the current sensing outcome with constant probability of transmission upon idle sensing results. We consider in this paper a new class of transmission policy referred to as adaptive transmission (AT) policies. As discussed in detail in Section \ref{sec:four}, the transmission strategies at time slot $t$ depends on the history of transmissions in the past.

In this paper, we consider two types of opportunistic access policies: MS-MT---the myopic sensing and memoryless transmission \cite{Chen&Tong:10ITA,Chen&Tong:JSAC} (with constant transmission probability $\mu_{\tiny \mbox{MS}}$
chosen to satisfy the interference constraints) and a new policy referred to as MS-AT---myopic sensing and adaptive transmission.
Figs. \ref{fig:illu1} illustrates the sample path of the MS-MT policy and the MS-AT policy is described in Section \ref{sec:four}.

\begin{figure}
\centering
\begin{psfrags}
\psfrag{PU}[l]{\footnotesize PU Tx.}
\psfrag{CU}[l]{\footnotesize CU Tx.}
\psfrag{I}[l]{\footnotesize idle sensing}
\psfrag{B}[l]{\footnotesize busy sensing}
\psfrag{ch1}[c]{\footnotesize Channel 1} \psfrag{ch2}[c]{\footnotesize
Channel 2}\psfrag{ch3}[c]{\footnotesize Channel 3}
\includegraphics[width=3in]{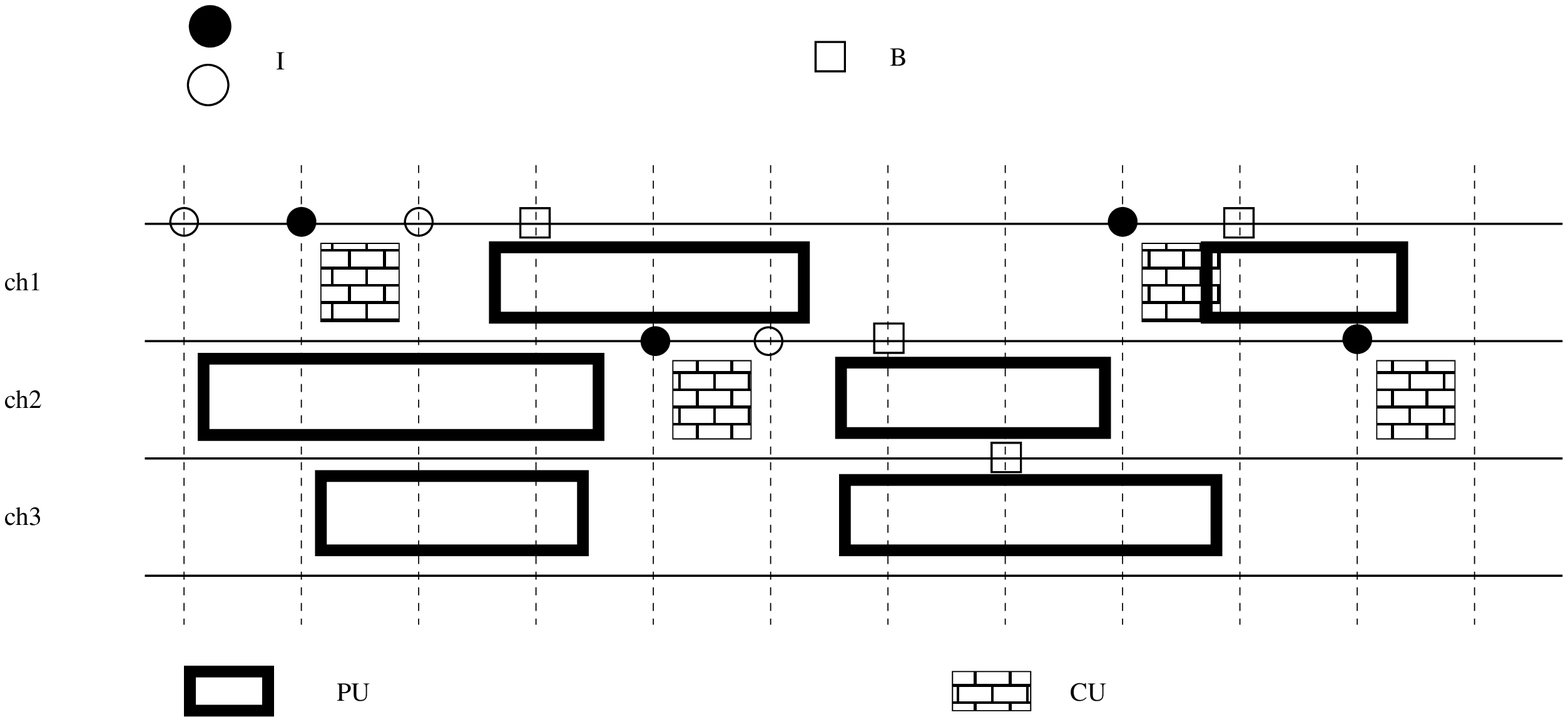}
\end{psfrags}
\caption{Illustration of MS-MT policy. Filled(Open) circle: secondary user decides
to(not to) transmit.} \label{fig:illu1}
\end{figure}

\section{Optimality of MS-AT}\label{sec:four}

First define $\theta=\log(\epsilon)/b$ to be the
effective bandwidth parameter for buffer size $b$ and packet drop
probability $\epsilon$. Rewrite the effective bandwidth
formula (\ref{eq:eb}) with $\theta$
\begin{equation*}
a^\pi(\epsilon,b)=\lim_{n\to\infty}\frac{\log\mathbb{E}_\pi\exp(\theta\sum_{t=1}^nR^\pi_t)}{n\theta}.
\end{equation*}

\subsection{Adaptive Transmission Policy}

According to the effective bandwidth
formula (\ref{eq:eb}) a simple upper bound can be easily obtained, \textit{i.e.}, the optimal throughput under collision parameter $\gamma$. To achieve this upper bound, we first describe the mechanism of adaptive transmission
policy (AT). We employ the acknowledgement of the secondary user to
aid the transmission decision. Specifically, after sensing a channel to be idle in slot $t$ the secondary user
counts the total number of acknowledgements received up to slot
$t-1$, denoted by $A_t$ and transmits with probability
$1$ if $A_t<\tau t$. Otherwise if $A_t\geq\tau t$, the secondary
user stays silent. Here $\tau$ is the parameter of AT policy
and controls the collision caused by the secondary user.

The adaptive transmission policy can be also thought of as a debt based transmission policy. The target of the secondary user is set to be transmitting $\tau t$ packets within the first $t$ slots, thus if $A_t<\tau t$, the secondary user is in debt and thus needs to transmits with probability
$1$, and otherwise, if $A_t\geq\tau t$, the secondary
user has a reasonable balance and stays silent.

The optimality of the MS-AT policy with respect to
effective bandwidth and the optimal effective bandwidth as a function of the collision parameter $\gamma$ is provided in Theorem \ref{thm:tightthm}.

\begin{theorem}\label{thm:tightthm}
For any collision parameter $\gamma$, there
exists $\tau$ such that the MS-AT policy with parameter $\tau$ is feasible and
optimal for the constrained
effective bandwidth optimization with collision parameter
$\gamma$. The parameter $\tau$ of the optimal MS-AT policy is given by
\begin{equation}
\tau=\frac{N\gamma\exp(-\lambda T)(1-v(0)\exp(-\lambda
T))}{1-\exp(-\lambda
T)},
\end{equation}
where $v(0)=\mu/(\lambda+\mu)$ is the stationary distribution of
idle state. The optimal effective bandwidth is given by
\begin{equation}\label{eqn:opteb}
\mbox{\textmd{EB}}^\ast=\min\{\frac{N\gamma\exp(-\lambda T)(1-v(0)\exp(-\lambda
T))}{1-\exp(-\lambda
T)},\frac{\Psi_X^{\tiny \mbox{MS}(\infty)}(\tilde{\theta})}{\theta}\},
\end{equation}
where \begin{equation}\label{eqn:tilde}
\tilde{\theta}=\log(1-\exp(-\lambda T)(1-\exp(c\theta))),
\end{equation}
\begin{equation*}
\Psi^{\tiny \mbox{MS}(\infty)}_X(\tilde{\theta})=\lim_{n\rightarrow\infty}\frac{1}{n}\log \mathbb{E}^{\tiny \mbox{MS}(\infty)}[\exp(\tilde{\theta}
\sum_{k=1}^{n}R^{\tiny \mbox{MS}(\infty)}_k)],
\end{equation*}
and $\Psi_X^{\tiny \mbox{MS}(\infty)}(\tilde{\theta})/\theta$ is the optimal effective bandwidth when the collision parameter $\gamma$ is loose (the minimum in Eq. (\ref{eqn:opteb}) is assumed by the second term).

Also the MS-AT policy with parameter $\tau$ is
optimal with respect to the throughput, with optimal throughput
\begin{eqnarray*}
\mbox{\textmd{TH}}^\ast&=&\min\{\frac{N\gamma\exp(-\lambda T)(1-v(0)\exp(-\lambda
T))}{1-\exp(-\lambda
T)},\\
&&\lim_{n\to\infty}\frac{1}{n}\mathbb{E}^{\tiny \mbox{MS}(\infty)}\sum_{t=1}^nR^{\tiny \mbox{MS}(\infty)}_t\}.
\end{eqnarray*}
\end{theorem}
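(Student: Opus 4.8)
The plan is to prove optimality by sandwiching the optimum between matching upper and lower bounds whose common value is $\min\{\tau,\mathrm{EB}_\infty\}$, where $\tau$ is the stated collision term and $\mathrm{EB}_\infty:=\Psi_X^{\tiny \mbox{MS}(\infty)}(\tilde\theta)/\theta$ is the effective bandwidth of the transmit-whenever-idle policy; the single policy MS-AT with the stated $\tau$ will be shown to attain this value, and simultaneously to attain $\min\{\tau,\mathrm{TH}_\infty\}$ for throughput, where $\mathrm{TH}_\infty$ is the MS$(\infty)$ throughput. Two structural facts are the workhorses. First, by the memorylessness of the exponential idle holding time, a transmission issued on an idle-sensed channel keeps the channel idle for the whole slot with probability $p:=\exp(-\lambda T)$ and collides with probability $1-p$; writing $B_t$ for this within-slot success indicator, successes and collisions occur in the fixed ratio $p:(1-p)$, so every policy's long-run collision rate equals $\frac{1-p}{p}$ times its throughput. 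Second, letting $X_t$ be the indicator that the myopically sensed channel is idle at the sensing instant---a process determined by the channel states and the sensing rule alone, not by the transmission decisions---conditioning on $\{X_t\}$ and integrating out $B_t$ converts the moment generating function of the output into that of the availability count under the change of variable (\ref{eqn:tilde}), since $1-p(1-\exp(c\theta))=\exp(\tilde\theta)$; this is what identifies $\mathrm{EB}_\infty$ with the effective bandwidth of MS$(\infty)$.

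For the upper bound I would argue each term separately and for both figures of merit. Summing $C_{\pi,i}\le\gamma$ over the $N$ channels caps the total collision rate at $N\gamma(1-v(0)p)$, and converting to successes through the ratio above caps the throughput of every admissible policy at $\frac{p}{1-p}N\gamma(1-v(0)p)=\tau$; since effective bandwidth never exceeds throughput by Jensen's inequality, the effective bandwidth is also capped at $\tau$. For the availability term, any policy transmits at most on the slots it senses idle, so its output is dominated by that of transmit-whenever-idle; combined with the optimality of myopic sensing, which sample-path dominates the availability count $\sum_t X_t$ for i.i.d.\ positively correlated channels, this yields the bounds $\mathrm{EB}_\infty$ and $\mathrm{TH}_\infty$. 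Hence $\mathrm{EB}^\ast\le\min\{\tau,\mathrm{EB}_\infty\}$ and $\mathrm{TH}^\ast\le\min\{\tau,\mathrm{TH}_\infty\}$.

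Achievability hinges on reading the debt $D_t:=\tau t-A_t$ as a queue. The AT rule transmits exactly when $D_t>0$, so $D_t$ evolves as a discrete-time reflected queue fed by arrivals of rate $\tau$ per slot and drained by the MS$(\infty)$ supply $\{X_tB_t\}$, and $A_n=\tau n-D_n$ gives $\frac1n\log\mathbb{E}\exp(\theta A_n)=\theta\tau+\frac1n\log\mathbb{E}\exp(-\theta D_n)$ with $-\theta=|\theta|>0$. Applying the Glynn--Whitt large-deviation theory of (\ref{eq:decayrate})--(\ref{eq:eq}) to this debt queue, the stationary debt has tail decay rate $\eta$ solving $\tau\eta+\Psi^{\tiny \mbox{MS}(\infty)}(-\eta)=0$, and one checks that $\eta\gtrless|\theta|$ precisely as $\tau\lessgtr\mathrm{EB}_\infty$. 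Thus if $\tau\le\mathrm{EB}_\infty$ the $|\theta|$-exponential moment of the debt is finite, the correction term vanishes, and $\mathrm{EB}_{\mathrm{AT}}=\tau$; if $\tau>\mathrm{EB}_\infty$ the moment diverges, the generating function is dominated by the rare supply droughts during which AT harvests everything, and $\mathrm{EB}_{\mathrm{AT}}=\mathrm{EB}_\infty$. In both cases $\mathrm{EB}_{\mathrm{AT}}=\min\{\tau,\mathrm{EB}_\infty\}$, matching the upper bound; the throughput claim is the first-moment shadow, $\mathrm{TH}_{\mathrm{AT}}=\tau-\lim_n\frac1n\mathbb{E}D_n=\min\{\tau,\mathrm{TH}_\infty\}$, since the debt queue is stable exactly when $\tau<\mathrm{TH}_\infty$. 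Feasibility then follows because attempts occur only while in debt, so the collision rate is $\frac{1-p}{p}\min\{\tau,\mathrm{TH}_\infty\}\le\frac{1-p}{p}\tau=N\gamma(1-v(0)p)$ by the very choice of $\tau$; by symmetry of the i.i.d.\ channels this budget splits evenly and $C_{\pi,i}\le\gamma$ for each $i$.

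I expect the crux---and the main obstacle---to be the exponential-moment dichotomy for the debt queue: establishing that $\frac1n\log\mathbb{E}\exp(|\theta|D_n)\to0$ below the threshold and identifying the dominant large-deviation contribution above it, together with the verification that the threshold is exactly $\tau=\mathrm{EB}_\infty$ through the decay-rate equation $\tau\eta+\Psi^{\tiny \mbox{MS}(\infty)}(-\eta)=0$. This is where the effective-bandwidth character of the problem, as opposed to the much easier throughput statement obtained from first moments alone, genuinely enters. A secondary difficulty is upgrading the cited throughput-optimality of myopic sensing to the sample-path dominance of the availability count $\sum_t X_t$ required for the availability upper bound.
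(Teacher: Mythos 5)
Your overall architecture is the same as the paper's: cap the optimum by $\min\{\tau,\mathrm{EB}_\infty\}$ using the collision-to-success ratio $e^{-\lambda T}:(1-e^{-\lambda T})$ plus Jensen plus the unconstrained optimality of MS$(\infty)$, then show MS-AT$(\tau)$ attains it by controlling the debt $D_t=\tau t-A_t$; your feasibility argument ($A_{t+1}<\tau t+c$ because attempts occur only while in debt, then split by symmetry) is exactly the paper's. You also correctly identify the crux as the exponential-moment control of the debt. However, two of your steps would not go through as stated. First, the availability upper bound: myopic sensing does \emph{not} sample-path dominate the availability count $\sum_t X_t$ (pathwise a non-myopic policy can get lucky), and the first-moment optimality of the cited throughput results is not enough here because $\theta<0$ makes the criterion $\mathbb{E}e^{\theta\sum_t X_t}$ a risk-sensitive one. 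The paper's Lemma 1 fills this by a \emph{multiplicative} dynamic program (value recursion $V_t(\boldsymbol{\omega})=\min_a\mathbb{E}[e^{\theta X_a}V_{t+1}(\mathcal{T}(\boldsymbol{\omega},a))]$) together with a multiplicative analogue of Lemma 3 of Ahmad et al., and separately converts $M_{R,n}^\pi$ to $M_{X,n}^\pi$ through the inequality (\ref{eqn:geq}), which becomes an equality only for always-transmit policies.

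Second, the dichotomy for the debt queue: invoking Glynn--Whitt gives the tail decay rate of the \emph{stationary} debt, but what you need is $\sup_n\mathbb{E}e^{|\theta|D_n}<\infty$, a uniform-in-$n$ transient bound; convergence in distribution plus a finite stationary exponential moment does not yield $\frac{1}{n}\log\mathbb{E}e^{|\theta|D_n}\to 0$ without extra uniform integrability, and this is precisely the content you leave unproved. The paper supplies it by elementary bookkeeping: idle intervals are deterministically shorter than $1/\tau$ (Lemma 3 plus the debt bound), busy-interval lengths have exponentially decaying tails via a large-deviation bound on the MS$(\infty)$ supply, and Lemma \ref{lemma:finite} gives $\sup_n M_{R,n}^{\tiny\mbox{MS}(\infty)}(\theta)e^{-\theta\tau n}<\infty$; the two-case analysis on $T(n)$ then bounds $\mathbb{E}e^{|\theta|D_n}$ by a constant. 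Moreover, for the supercritical regime $\tau\geq\mathrm{EB}_\infty$ the paper does not attempt your direct ``rare supply droughts'' computation of $\mathrm{EB}_{\mathrm{AT}}$; it instead uses a clean sample-path coupling (Lemma \ref{lemma:mono}: larger $\tau$ achieves every $k$th success no later) to get monotonicity in $\tau$, and sandwiches $\mathrm{EB}_{\mathrm{AT}(\tau)}$ between $\sup_{\tau'<\mathrm{EB}_\infty}\tau'=\mathrm{EB}_\infty$ and the unconstrained optimum $\mathrm{EB}_\infty$. You would need to either adopt these devices or prove the transient moment bound and the supercritical identification independently before your proposal closes.
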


Theorem \ref{thm:tightthm} provides the solution to the constrained
effective bandwidth optimization problem by establishing the
optimality of MS-AT for all collision parameter $\gamma$. To prove Theorem \ref{thm:tightthm}, we
need to first prove a list of lemmas. In the later presentation we assume $c=1$ without loss of generality.

\subsection{MS-AT with $\tau=\infty$}

We first establish the effective bandwidth optimality of
MS-AT policy with $\tau=\infty$ under loose collision parameter $\gamma$. We denote the MS-AT policy with $\tau=\infty$ by MS-AT$(\infty)$. By the structure of MS-AT, the secondary user always transmits upon idle sensing results under MS-AT$(\infty)$ policy.
Denote by $X^{\tiny \mbox{MS}(\infty)}_t$ the indicator of the secondary user having an
idle sensing result in slot $t$, following MS-AT$(\infty)$ policy.

Call a collision parameter $\gamma$ loose if the MS-AT$(\infty)$ policy is admissible with respect to $\gamma$. We are interested in proving that the MS-AT$(\infty)$ policy has better effective bandwidth than any other admissible opportunistic access policy. To this end, we relax the collision constraint and turn to show that the
MS-AT$(\infty)$ policy has better effective bandwidth than any other opportunistic access policy, whether admissible or not. This relaxation leads to an unconstrained risk sensitive Markov decision process, the optimality of which is
provided in Lemma \ref{thm:1}.

\begin{lemma}\label{thm:1}
Under loose collision
constraint, MS-AT$(\infty)$ is effective bandwidth
optimal over all admissible policies in $\boldsymbol{\Pi}(\gamma)$.
\end{lemma}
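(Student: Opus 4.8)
The plan is to turn the effective-bandwidth maximization into a minimization and then exploit monotonicity. Recall $\theta=\log(\epsilon)/b<0$ and that, by the effective-bandwidth formula, $a^\pi=\Psi_R^\pi(\theta)/\theta$ with $\Psi_R^\pi(\theta)=\lim_{n\to\infty}\frac1n\log\mathbb{E}_\pi\exp(\theta\sum_{t=1}^n R^\pi_t)$. Since $\theta<0$, maximizing $a^\pi$ is equivalent to minimizing $\mathbb{E}_\pi\exp(\theta\sum_{t=1}^n R^\pi_t)$ for large $n$, a risk-sensitive reward maximization. First I would invoke the relaxation announced before the lemma: because $\gamma$ is loose, MS-AT$(\infty)$ lies in $\boldsymbol{\Pi}(\gamma)$, so it suffices to show that MS-AT$(\infty)$ minimizes $\mathbb{E}_\pi\exp(\theta\sum_{t=1}^n R^\pi_t)$ over \emph{all} policies, admissible or not; optimality over $\boldsymbol{\Pi}(\gamma)$ is then immediate.

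I would then decompose an arbitrary policy into its sensing and transmission rules and dispose of the transmission rule first. The primary channels evolve autonomously, so a transmission never alters the law of the channel trajectories; writing $R_t=A_tI_t$ with $A_t\in\{0,1\}$ the transmit action on an idle sensing and $I_t$ the indicator that the sensed channel stays idle throughout slot $t$, transmitting weakly increases each $R_t$ (from $0$ to $I_t$) and, via the acknowledgement, additionally reveals $I_t$. In the relaxed constraint-free problem a transmission therefore carries no penalty: by reward- and information-monotonicity of the risk-sensitive value there is an optimal policy that transmits on every idle sensing, i.e. uses the AT$(\infty)$ transmission rule. This reduces the problem to optimizing the sensing rule with transmission fixed to always-transmit, so that $R_t=I_t$.

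It then remains to show myopic sensing is optimal for the risk-sensitive criterion. The key step I would prove is the \emph{distributional} statement that, under always-transmit, $\sum_{t=1}^n R_t^{\mathrm{MS}(\infty)}$ first-order stochastically dominates $\sum_{t=1}^n R_t^{\pi}$ for every sensing rule $\pi$. The channel states observed at the sensing instants form a sampled two-state Markov chain that inherits the positive-correlation (monotone) structure of the continuous-time idle/busy process, and the sensing rule controls which i.i.d. chain is observed in each slot; this places us in the setting of the myopic-optimality results of \cite{Zhao&Krishnamachari&Liu:08TWC,Ahmad&Liu&Javidi&Zhao&Krishnamachari:09IT}, whose interchange/coupling argument compares the myopic first action against an arbitrary first action followed by the myopic continuation. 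Granting the dominance, the fact that $x\mapsto\exp(\theta x)$ is decreasing for $\theta<0$ gives $\mathbb{E}_{\mathrm{MS}(\infty)}\exp(\theta\sum_{t=1}^n R_t)\le\mathbb{E}_{\pi}\exp(\theta\sum_{t=1}^n R_t)$, hence $\Psi_R^{\mathrm{MS}(\infty)}(\theta)\le\Psi_R^{\pi}(\theta)$; dividing by $\theta<0$ yields $a^{\mathrm{MS}(\infty)}\ge a^{\pi}$, which is the claim.

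The main obstacle is this last step. The cited theorems establish that myopic sensing maximizes the \emph{expected} cumulative reward, whereas the risk-sensitive objective needs the stronger statement that it maximizes the cumulative reward in the stochastic-dominance sense. Upgrading expectation-optimality to stochastic dominance --- while consistently handling the within-slot continuous-time dynamics, so that ``sensed idle'' and ``idle throughout the slot'' enter $R_t$ correctly --- is where the real work lies; the most promising route is a single coupling that drives the myopic policy and every competitor by the same realized channel trajectories and then performs the interchange slot by slot.
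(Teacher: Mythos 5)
Your reduction matches the paper's in outline: convert the problem to minimizing $M^\pi_{R,n}(\theta)=\mathbb{E}_\pi\exp(\theta\sum_{t=1}^nR^\pi_t)$ (legitimate since $\theta<0$), drop the collision constraint because $\gamma$ is loose, and dispose of the transmission rule by arguing that always-transmit is optimal in the relaxed problem. The paper handles that last reduction more quantitatively than you do --- it conditions on the idle-sensing indicators $X^\pi_1,\ldots,X^\pi_n$ to get $M^\pi_{R,n}(\theta)\geq M^\pi_{X,n}\bigl(\log(1-(\sup_t\mu^\pi_t)e^{-\lambda T}(1-e^{c\theta}))\bigr)$ with equality for $\mu_t\equiv1$, which simultaneously produces the parameter $\tilde\theta$ used throughout the rest of the paper --- but your pathwise monotonicity argument for this step is serviceable.

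The genuine gap is at the sensing step. You propose to prove that $\sum_{t=1}^nR^{\mathrm{MS}(\infty)}_t$ first-order stochastically dominates $\sum_{t=1}^nR^\pi_t$ for every sensing rule, and then apply the decreasing map $x\mapsto e^{\theta x}$. That intermediate claim is strictly stronger than what is needed (it would give optimality for \emph{every} increasing utility, not just the exponential family), it is not supplied by the cited references (Ahmad et al.\ prove expectation optimality via a value-function interchange, not a distributional coupling), and you do not prove it --- you explicitly flag it as ``where the real work lies.'' The paper avoids this entirely by exploiting the multiplicative structure of the objective: because $\exp(\theta\sum_{k=t}^KX_k)$ factors over time, one can define the risk-sensitive value function $V_t(\boldsymbol{\omega})=\inf_\pi\mathbb{E}_\pi[\exp(\theta\sum_{k=t}^KX_k)\mid\boldsymbol{\omega}]$, obtain a multiplicative dynamic-programming recursion, and rerun the interchange lemmas of Ahmad et al.\ in multiplicative form, the key modified inequality being $e^\theta V_{t+1}(\omega_1,\ldots,\omega_{n-2},p_{11},p_{01})\geq V_{t+1}(p_{01},\omega_1,\ldots,\omega_{n-2},p_{11})$. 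In other words, the correct target is the exponential-moment ordering $M^\pi_{X,n}(\theta)\geq M^{\mathrm{MS}(\infty)}_{X,n}(\theta)$ for all $\theta<0$, proved by DP, not stochastic dominance proved by coupling. As written, your argument establishes the lemma only conditionally on an unproven (and possibly false, or at least much harder) dominance claim.
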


\begin{proof}
In view of Eq. (\ref{eq:eb}) and the fact that the
effective bandwidth parameter $\theta=\log(\epsilon)/b<0$, to show the effective bandwidth
optimality we just need to show that MS-AT$(\infty)$ minimizes
$M^\pi_{R,n}(\theta)=\mathbb{E}_\pi\exp(\theta\sum_{t=1}^nR^\pi_t)$ over all $\pi\in\Pi(\gamma)$ for all
$n\geq 1$. We then make connection between $M^\pi_{R,n}(\theta)$ and
$M^\pi_{X,n}(\theta)=\mathbb{E}_\pi\exp(\theta\sum_{t=1}^nX^\pi_t)$, where $X^\pi_t$
is the indicator of the secondary user having an idle sensing result
in slot $t$ under opportunistic access policy $\pi$. Specifically, we have the following inequality
\begin{eqnarray}
\nonumber M^\pi_{R,n}(\theta)&=&
\mathbb{E}[\mathbb{E}[e^{\theta\sum_{t=1}^nR^\pi_t}\mid
X^\pi_1,\ldots,X^\pi_n]] \\
\nonumber &=&\mathbb{E}[\prod_{t=1}^n\mathbb{E}[e^{\theta R^\pi_t}\mid X^\pi_t]] \\
\nonumber &=&\mathbb{E}[\prod_{t=1,X^\pi_t=1}^n [1-\mu^\pi_te^{-\lambda
T}(1-e^{c\theta})]] \\
\label{eqn:geq}&\geq&\mathbb{E}[[1-(\sup_{t=1}^\infty\mu^\pi_t)e^{-\lambda
T}(1-e^{c\theta})]^{\sum_{t=1}^nX^\pi_t}] \\
\label{eqn:connection}
&=&M^\pi_{X,n}(\log(1-(\sup_{t=1}^\infty\mu^\pi_t)e^{-\lambda T}(1-e^{c\theta}))),
\end{eqnarray}
where $\mu^\pi_t$ is the transmission probability used by policy $\pi$ in slot $t$.

Under loose collision constraint it holds for opportunistic access policy $\pi$
that $\mu_t^\pi\leq1$. On the other hand, the MS-AT$(\infty)$ policy always transmit with probability 1. Therefore
equality holds in Eq.
(\ref{eqn:geq}) for MS-AT$(\infty)$. Thus we just need to show that for
an arbitrary opportunistic access policy $\pi$,
\begin{equation}\label{eqn:minimize}
M_{X,n}^\pi(\theta)\geq
M_{X,n}^{\tiny \mbox{MS}(\infty)}(\theta),
\end{equation}
for all $\theta<0$.

We prove Eq. (\ref{eqn:minimize})
with the following multiplicative dynamic programming recursion \cite{Hernandez&Marcus:96SCL} for
the finite horizon problem with
$\mathbb{E}e^{\theta\sum_{k=1}^KX_k}$ as the objective function. First define
\begin{eqnarray*}
V_K(\boldsymbol{\omega})&=&\min_{1\leq a\leq
N}\mathbb{E}e^{\theta X_a(\boldsymbol{\omega})}\\
V_t(\boldsymbol{\omega})&=&\min_{1\leq a\leq N}\mathbb{E}[e^{\theta
X_a(\boldsymbol{\omega})}V_{t+1}(\mathcal{T}(\boldsymbol{\omega},a))]
\end{eqnarray*}
for $t=1,\ldots,K-1$, where $a$ is the dummy variable denoting the current action at time $t$ of choosing to sense the $a$th channel, and $V_t(\cdot)$ is the value function,
defined as the minimum expected future objective function that can
be achieved starting from $t$ when the information state is
$\boldsymbol{\omega}$, \textit{i.e.},
$V_t(\boldsymbol{\omega})=\inf_\pi\mathbb{E}_\pi[
\exp(\theta\sum_{k=t}^KX_k)\mid\boldsymbol{\omega}]$, where the information state
$\boldsymbol{\omega}$ is the length $N$ vector with the $i$th component being the conditional probability that the $i$th primary channel will be sensed idle in the next slot (see \cite{Ahmad&Liu&Javidi&Zhao&Krishnamachari:09IT} for the
definition and interpretation of the channel information state
vector $\boldsymbol{\omega}$). Expanding the dynamic programming
recursion based on the sensing outcome, we have
\begin{eqnarray*}
V_K(\boldsymbol{\omega})&=&\min_{1\leq a\leq N}\mathbb{E}e^{\theta
X_a(\boldsymbol{\omega})}=\min_{1\leq a\leq N}\{\omega_ae^{\theta}+(1-\omega_a)\} \\
V_t(\boldsymbol{\omega})&=&\min_{1\leq a\leq
N}\mathbb{E}[e^{\theta X_a(\boldsymbol{\omega})}V_{t+1}(\mathcal{T}(\boldsymbol{\omega},a))] \\
&=&\min_{1\leq a\leq
N}\{\omega_ae^{\theta}V_{t+1}(\mathcal{T}(\boldsymbol{\omega},a\mid 0)) \\
&&+(1-\omega_a)V_{t+1}(\mathcal{T}(\boldsymbol{\omega},a)\mid1)\}
\end{eqnarray*}

%Similar to the arguments in
%\cite{Ahmad&Liu&Javidi&Zhao&Krishnamachari:09IT}, we can show that
%there exist $K$ $n$-variable functions, denoted by $W_t(\cdot)$,
%$t=1,\ldots,K$, each of which is a polynomial of order $1$ and
%can
%be represented recursively in the following form:
%\begin{eqnarray*}
%W_t(\boldsymbol{\omega})&=&\omega_n \mu\exp(-\lambda T) e^{\theta}W_{t+1}(\tau(\omega_1),\ldots,\tau(\omega_{n-1}),p_{11}) \\
%&&+\omega_n (1-\mu\exp(-\lambda T)) W_{t+1}(\tau(\omega_1),\ldots,\tau(\omega_{n-1}),p_{11}) \\
%&&+(1-\omega_n) W_{t+1}(p_{01},\tau(\omega_1),\ldots,\tau(\omega_{n-1})), \\
%\end{eqnarray*}
%$$W_K(\boldsymbol{\omega})=\omega_n pe^{\theta}+\omega_n(1-p)+1-\omega_n.$$
%when $\boldsymbol{\omega}$ represents the ordered list of
%information states with $\omega_1\leq\ldots\leq\omega_n$, then
%$W_t(\boldsymbol{\omega})$ is the expected total reward obtained by
%the myopic policy from time $t$ on.

%The fact that is a polynomial of order $1$ and affine in each of its
%elements implies that
%\begin{eqnarray*}
%&&W_t(\omega_1,\ldots,\omega_{n-2},y,x)-
%W_t(\omega_1,\ldots,\omega_{n-2},x,y) \\
%&=&(x-y)[W_t(\omega_1,\ldots,\omega_{n-2},0,1)-
%W_t(\omega_1,\ldots,\omega_{n-2},1,0)]\end{eqnarray*}

%Given the optimality of the myopic policy at times $t+1,\ldots,T$,
%the optimality at time $t$ is equivalent to
%\begin{eqnarray*}
%&&W_t(\omega_1,\ldots,\omega_{i-1},\omega_{i+1},\ldots,\omega_n,\omega_{i})\leq
%W_t(\omega_1,\ldots,\omega_n)\end{eqnarray*} for all
%$\boldsymbol{\omega}$ such that
%$\omega_1\leq\ldots\leq\omega_{i}\leq\ldots\leq\omega_n$.

We can proceed from here and show that picking the largest component
in the channel information state vector $\boldsymbol{\omega}$ to
sense solves the multiplicative dynamic programming equation, and
thus prove the effective bandwidth optimality of MS-MT, following the line of
lemmas in \cite{Ahmad&Liu&Javidi&Zhao&Krishnamachari:09IT}. One key
lemma in \cite{Ahmad&Liu&Javidi&Zhao&Krishnamachari:09IT} (Lemma 3)
needs to be modified to the multiplicative version
\begin{eqnarray*}
e^\theta V_{t+1}(\omega_1,\ldots,\omega_{n-2},p_{11},p_{01})\geq
V_{t+1}(p_{01},\omega_1,\ldots,\omega_{n-2},p_{11}).
\end{eqnarray*}
We omit the details of the proof due to limited space.
%For all $j$, $1\leq j\leq n-3$, and all $x\geq y$,
%\begin{eqnarray*}
%W_t(\omega_1,\ldots,\omega_j,x,y,\ldots,\omega_n)\leq
%W_t(\omega_1,\ldots,\omega_j,y,x,\ldots,\omega_n)
%\end{eqnarray*}
%
%For all $x\geq y$,
%\begin{eqnarray*}
%W_t(\omega_1,\ldots,\omega_j,\ldots,\omega_{n-2},x,y)\leq
%W_t(\omega_1,\ldots,\omega_j,\ldots,\omega_{n-2},y,x).
%\end{eqnarray*}
\end{proof}

Note that under loose collision constraints as $\gamma$ increases the optimal effective bandwidth stays constant; the optimal value is
\begin{equation*}
\mbox{\textmd{EB}}^\ast(\epsilon,b)=\frac{\Psi_R^{\tiny \mbox{MS}(\infty)}(\theta)}{\theta}
=\frac{\Psi_X^{\tiny \mbox{MS}(\infty)}(\tilde{\theta})}{\theta},
\end{equation*}
where $\theta=\log(\epsilon)/b$, and
\begin{equation}\label{eqn:tilde}
\tilde{\theta}=\log(1-\exp(-\lambda T)(1-\exp(c\theta))),
\end{equation}
and the last equality follows from Eq. (\ref{eqn:connection}).

\subsection{Structural Properties of MS-AT}

We analyze the structure of the adaptive transmission policy
(AT) coupled with myopic channel sensing policy. Specifically, there
will be consecutive time slots in which the secondary user is in good balance (therefore stays
silent) and is in debt (therefore keeps transmitting upon idle sensing results), respectively. We
denote the consecutive intervals by $I_1,\ldots,I_n,\ldots$ and
$B_1,\ldots,B_n,\ldots$. We also denote by $i_1,\ldots,i_n,\ldots$
and $b_1,\ldots,b_n,\ldots$ the first time slot in the intervals
$I_1,\ldots,I_n,\ldots$ and
$B_1,\ldots,B_n,\ldots$. Note that in the first slot it always holds that $A_1=0<\tau$, which indicates that the interval $B_1$ ($B_n$) comes before interval $I_1$ ($I_n$).

\begin{lemma}\label{lemma:mono}
As $\tau$ increases, the effective bandwidth associated with MS-AT with parameter $\tau$ is non-decreasing.
\end{lemma}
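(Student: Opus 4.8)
The plan is to establish monotonicity of the effective bandwidth in the AT parameter $\tau$ by a coupling/monotone-comparison argument on the reward process. Fix two parameters $\tau_1 < \tau_2$ and couple the two MS-AT policies on a common probability space by feeding them the same channel realizations (same underlying continuous-time Markov processes on all $N$ channels, hence the same idle/busy sensing outcomes under the common myopic sensing rule). Since the effective bandwidth is given by the formula $a^\pi(\epsilon,b)=\lim_{n\to\infty}\frac{1}{n\theta}\log\mbbE_\pi\exp(\theta\sum_{t=1}^n R^\pi_t)$ with $\theta=\log(\epsilon)/b<0$, and the prefactor $1/\theta$ is negative, proving that the effective bandwidth is non-decreasing in $\tau$ reduces to showing that the moment generating quantity $M^\pi_{R,n}(\theta)=\mbbE_\pi\exp(\theta\sum_{t=1}^n R^\pi_t)$ is \emph{non-increasing} in $\tau$ for every $n$ and every $\theta<0$. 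Because $\theta<0$ and $\exp(\theta\cdot)$ is decreasing, this in turn follows if I can show the cumulative reward $\sum_{t=1}^n R^{\tau}_t$ is stochastically larger (in fact pathwise larger under the coupling) for the larger parameter $\tau_2$.

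First I would set up the pathwise comparison. Recall the debt interpretation: under parameter $\tau$ the secondary user transmits upon an idle sensing result exactly when $A_t < \tau t$, where $A_t$ is the number of acknowledgements (successful transmissions) received up to slot $t-1$. The key structural claim is that, under the common coupling, the acknowledgement counter satisfies $A^{\tau_2}_t \ge A^{\tau_1}_t$ for all $t$, i.e.\ the more permissive policy (larger $\tau$) never falls behind in accumulated successful transmissions. I would prove this by induction on $t$: whenever both policies see the same idle sensing outcome and the larger-$\tau$ policy is ``in debt'' ($A^{\tau_2}_t<\tau_2 t$), either it transmits and gains a success that the smaller-$\tau$ policy gains only if it also transmits, or the threshold comparison $\tau_1 t \le \tau_2 t$ guarantees that the transmit-region for $\tau_1$ is contained in that for $\tau_2$, so $\tau_2$ transmits (and succeeds, since success under the coupling depends only on the common idle outcome) whenever $\tau_1$ does. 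This monotone threshold containment is what preserves the ordering across slots.

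The hard part will be making this induction airtight, because the dynamics are \emph{closed-loop}: the decision to transmit depends on $A_t$, which is itself the accumulated history, so the two coupled chains can desynchronize their debt states and the comparison $A^{\tau_2}_t \ge A^{\tau_1}_t$ is not immediate from a single-slot inequality. The subtlety is that when $\tau_1$ transmits but $\tau_2$ is currently out of debt ($A^{\tau_2}_t \ge \tau_2 t$), the smaller-$\tau$ policy could momentarily gain an acknowledgement that the larger one forgoes, threatening the ordering. I would resolve this by showing that such a situation forces $A^{\tau_2}_t \ge \tau_2 t > \tau_1 t > A^{\tau_1}_t$ (using that $\tau_1$ was in debt), so even after $\tau_1$'s gain of one, the inequality $A^{\tau_2}_{t+1}\ge A^{\tau_1}_{t+1}$ is maintained provided the slack $\tau_2 t - \tau_1 t$ dominates single-slot increments; a careful bookkeeping of the integer counter against the linear thresholds closes the gap.

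Once the pathwise ordering $\sum_{t=1}^n R^{\tau_2}_t \ge \sum_{t=1}^n R^{\tau_1}_t$ holds on the coupled space, monotonicity of the effective bandwidth is immediate: taking $\exp(\theta\,\cdot)$ with $\theta<0$ reverses the inequality, taking expectations preserves it, giving $M^{\tau_2}_{R,n}(\theta)\le M^{\tau_1}_{R,n}(\theta)$, and then dividing by $n\theta<0$ and passing to the limit reverses it once more to yield $a^{\tau_2}(\epsilon,b)\ge a^{\tau_1}(\epsilon,b)$, which is the desired non-decreasing property.
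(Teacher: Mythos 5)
Your proposal is correct and is essentially the paper's own argument: a sample-path coupling on common channel realizations (hence common myopic-sensing outcomes), a pathwise dominance claim for the accumulated successes of the larger-$\tau$ policy, and the observation that in the only problematic configuration the larger-$\tau$ policy is necessarily in (strictly more) debt, which together with integrality of the counters preserves the ordering. The only cosmetic difference is that you run the induction over time slots via $A_t^{\tau_2}\ge A_t^{\tau_1}$, whereas the paper runs the equivalent induction over the index $k$ of the $k$th successful transmission and argues by contradiction at the first violating $k$.
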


\begin{proof}
Prove this lemma with a sample path argument. Specifically, for arbitrary $\tau_1<\tau_2$ and any horizon $n$ we show that along any sample path of the channel state realization the MS-AT policy with $\tau_2$ accumulates more successful transmissions than the MS-AT policy with $\tau_1$, \textit{i.e.},
\begin{equation}\label{eqn:moretransmission}
\sum_{t=1}^nr_t^{\tau_1}\leq\sum_{t=1}^nr_t^{\tau_2},
\end{equation}
where the lower case $r$ indicates the quantities are conditioned on a sample path realization. Eq. (\ref{eqn:moretransmission}) will follow if one can show for any integer $k$ that the MS-AT policy with $\tau_2$ achieves the $k$th successful transmission no later than the MS-AT policy with $\tau_1$ for the specific channel state realization.

Assume otherwise, \textit{i.e}, for some integer $k$ the $k$th successful transmission occurs earlier under the MS-AT policy with $\tau_1$. Without loss of generality assume that the $k$ is the smallest among the integers that satisfy this condition. Thus a contradiction can be easily drawn since in the time slot of the $k$th successful transmission under the MS-AT policy with $\tau_1$, the MS-AT policy with $\tau_2$ has more debt than the MS-AT policy with $\tau_1$, and thus will decide to transmit, which is guaranteed to be successful since the MS-AT policy with $\tau_1$ has the $k$th successful transmission in that very time slot. Therefore the contradiction proves Eq. (\ref{eqn:moretransmission}), and the claim of the lemma follows.
\end{proof}

\begin{lemma}\label{lemma:structure1}
For all $k\geq1$ it holds that
\begin{equation}
A_{i_k}\geq i_k\tau,A_{b_k}<b_k\tau,
\end{equation} and
\begin{equation}
A_{i_k-1}<(i_k-1)\tau,A_{b_k-1}\geq(b_k-1)\tau.
\end{equation}
\end{lemma}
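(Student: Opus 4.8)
The plan is to prove the claim purely by unwinding the definitions of the intervals $I_k$, $B_k$ and their starting slots $i_k$, $b_k$, together with the fact that these intervals alternate. Recall that at slot $t$ the secondary user is \emph{in debt} (transmits upon an idle sensing result) precisely when $A_t<t\tau$, and is \emph{in good balance} (stays silent) precisely when $A_t\geq t\tau$. Since these two conditions are mutually exclusive and exhaustive, the slot indices partition into maximal runs that alternate between the two types; here $B_k$ is the $k$th debt run and $I_k$ the $k$th good-balance run, and, as already noted before the lemma, $B_1$ precedes $I_1$, which precedes $B_2$, and so on.

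First I would dispose of the two start-of-interval inequalities, which need no real argument. Since $i_k$ is by definition a slot of the good-balance interval $I_k$, the user stays silent at $i_k$, which is exactly $A_{i_k}\geq i_k\tau$. Symmetrically, $b_k$ is a slot of the debt interval $B_k$, so the user transmits there, giving $A_{b_k}<b_k\tau$.

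Next I would obtain the two boundary inequalities from the alternation. Because the runs alternate and $B_k$ immediately precedes $I_k$, the slot $i_k-1$ must be the last slot of $B_k$ (it cannot lie in $I_k$, as $i_k$ is by definition the first such slot); hence it is a debt slot and $A_{i_k-1}<(i_k-1)\tau$. Likewise, for $k\geq2$ the interval $B_k$ is immediately preceded by $I_{k-1}$, so $b_k-1$ is the last slot of $I_{k-1}$, a good-balance slot, giving $A_{b_k-1}\geq(b_k-1)\tau$.

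The only step requiring care is the base case $k=1$ of the last inequality. Since $A_1=0<\tau$, we have $b_1=1$, so $b_1-1=0$ and the claim reduces to $A_0\geq0$, which holds because $A_0=0$ by convention. I expect this boundary bookkeeping---confirming that $b_1=1$ and that the alternation is not broken at the very first slot---to be the only minor obstacle; everything else follows immediately from the definitions of $i_k$ and $b_k$ as the first slots of their respective intervals.
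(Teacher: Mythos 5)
Your proof is correct and follows essentially the same route as the paper's: read the debt/balance condition off the definition of the policy at the start slots $i_k$, $b_k$, use the alternation of the intervals to classify the preceding slots $i_k-1$, $b_k-1$, and handle the $k=1$ edge case via the convention $A_0=0$. No gaps.
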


\begin{proof}
We prove the inequalities using the alternating nature of the
sequence of intervals we define and the structure of the MS-AT policy.

By the structure of the adaptive transmission policy, in time slot $t$ if the MS-AT policy attempts to transmit (\textit{i.e.}, $t\in B_k$ for some $k$), then $A_t<\tau t$. Similarly, if the MS-AT policy stays silent (\textit{i.e.}, $t\in I_k$ for some $k$), then $A_t\geq\tau t$.

For all $k\geq1$, by definition $i_k$ is the first time slot in the
$k$th idle period, therefore $A_{i_k}\geq i_k\tau$.
For all $k\geq1$, by definition $i_k-1$ is the last time slot before
the $k$th idle period, therefore $i_k-1$ lies in a certain busy interval, which indicates $A_{i_k-1}<(i_k-1)\tau$.
The other two inequalities can be shown similarly, except that there is one special case for $k=1$ ($b_1-1=0$ does not lie in any idle or busy interval). To accommodate this special case we define by convention $A_0=0$.
\end{proof}

\begin{lemma}
For any $\tau<\Psi_X^{\tiny \mbox{MS}(\infty)}(\tilde{\theta})/\theta$,
\begin{enumerate}
\item the lengths of the intervals
$I_k$ are bounded by $\frac{1}{\tau}$ almost surely,
\item the lengths of the intervals $B_k$ have finite expectation.
\end{enumerate}
\end{lemma}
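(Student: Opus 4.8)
The two parts call for quite different arguments, so I would treat them separately. Part 1 is purely pathwise and deterministic. During an idle interval $I_k$ the secondary user stays silent, so no acknowledgement arrives and $A_t$ stays frozen at $A_{i_k}$ throughout $I_k$; the interval persists exactly while the silence condition $A_t\ge\tau t$ holds, so its last slot is $\lfloor A_{i_k}/\tau\rfloor$ and its length is $\lfloor A_{i_k}/\tau\rfloor-i_k+1$. First I would bound $A_{i_k}$ from above: slot $i_k-1$ lies in a busy interval, so by Lemma \ref{lemma:structure1} one has $A_{i_k-1}<(i_k-1)\tau$, and since a single slot delivers at most one acknowledgement (recall $c=1$), $A_{i_k}\le A_{i_k-1}+1<(i_k-1)\tau+1$. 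Substituting into the length formula shows it is strictly less than $1/\tau$ on every sample path, hence almost surely.

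Part 2 is where the hypothesis $\tau<\Psi_X^{\tiny \mbox{MS}(\infty)}(\tilde\theta)/\theta$ enters, and it is the heart of the lemma. The plan is to recast the length of a busy interval $B_k$ as the first-passage time of a ``debt'' process. Writing $D_t=\tau t-A_t$ for the debt and $S_m$ for the number of successful transmissions in the first $m$ slots of $B_k$, I have $A_{b_k+m}=A_{b_k}+S_m$, so $B_k$ terminates at the first $m$ with $S_m\ge\tau m+D_{b_k}$; thus the length is $\inf\{m\ge1:S_m\ge\tau m+D_{b_k}\}$. Two inputs are needed. First, the initial debt is bounded: slot $b_k-1$ lies in an idle interval, so $A_{b_k}=A_{b_k-1}\ge(b_k-1)\tau$ by Lemma \ref{lemma:structure1} and the silence of that interval, whence $D_{b_k}\le\tau$. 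Second, and crucially, throughout $B_k$ the user is in debt and therefore transmits with probability one on every idle sensing, \emph{i.e.}, it behaves exactly as MS-AT$(\infty)$; hence, conditioned on the channel information state at $b_k$, the increments of $S_m$ are distributed as the output of MS-AT$(\infty)$ and obey $\frac1m\log\mathbb{E}[e^{\theta S_m}]\to\Psi_R^{\tiny \mbox{MS}(\infty)}(\theta)=\Psi_X^{\tiny \mbox{MS}(\infty)}(\tilde\theta)$, the last equality being Eq. (\ref{eqn:connection}).

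With these in hand I would estimate the tail by a Chernoff bound at the exponent $\theta<0$. Using $D_{b_k}\le\tau$,
\[
\Pr[\,|B_k|>m\,]\le\Pr[\,S_m<\tau(m+1)\,]\le e^{-\theta\tau(m+1)}\,\mathbb{E}[e^{\theta S_m}],
\]
where the last step is Markov's inequality applied to $e^{\theta S_m}$ (the inequality direction is preserved since $\theta<0$). Taking $\frac1m\log$ and letting $m\to\infty$ gives the rate $-\theta\tau+\Psi_X^{\tiny \mbox{MS}(\infty)}(\tilde\theta)$, which the hypothesis $\tau<\Psi_X^{\tiny \mbox{MS}(\infty)}(\tilde\theta)/\theta$ together with $\theta<0$ renders strictly negative. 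Hence $\Pr[\,|B_k|>m\,]$ decays geometrically and $\mathbb{E}|B_k|=\sum_{m\ge0}\Pr[\,|B_k|>m\,]<\infty$.

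The main obstacle I anticipate is the Markov-modulated nature of the increments of $S_m$: the channel information state at the start of $B_k$ is random and history-dependent, so the Gartner-Ellis limit and the Chernoff estimate must be made uniform over this initial state, up to a bounded multiplicative prefactor. I would handle this by invoking the geometric ergodicity (spectral gap) of the myopic-sensing Markov chain, which guarantees the limiting log-MGF is independent of the initial state and that the prefactor does not affect the geometric decay. This is the only non-routine point; the remainder is the pathwise debt accounting and a standard Chernoff argument.
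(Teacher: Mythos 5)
Your proof is correct, and Part 1 is essentially the paper's argument: the paper bounds $|I_k|=b_{k+1}-i_k$ by combining $A_{b_{k+1}}=A_{i_k}$ (no transmissions while silent) with the two structural inequalities from Lemma \ref{lemma:structure1}, which is exactly your ``$A_t$ frozen at $A_{i_k}$'' computation in different notation. For Part 2 you perform the same reduction as the paper --- the initial debt at $b_k$ is at most $\tau$, the user behaves as MS-AT$(\infty)$ throughout $B_k$, and $\{|B_k|>m\}$ forces $S_m<\tau m+\tau$ --- but you close the argument differently. The paper observes that $\tau+c/n$ sits strictly below the long-run \emph{mean} throughput of MS-AT$(\infty)$ and then appeals to exponential decay of lower deviations of the empirical mean for the underlying finite-state recurrent Markov chain; you instead apply Markov's inequality to $e^{\theta S_m}$ at the tilt $\theta<0$ and use the Gartner--Ellis limit $\Psi_X^{\tiny \mbox{MS}(\infty)}(\tilde\theta)$ directly, obtaining the explicit rate $\Psi_X^{\tiny \mbox{MS}(\infty)}(\tilde\theta)-\theta\tau<0$ from the hypothesis. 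Your route is more self-contained (it needs only the log-MGF limit already used in Lemma \ref{lemma:finite}, not a separate large-deviation principle) and makes the role of the threshold $\Psi_X^{\tiny \mbox{MS}(\infty)}(\tilde\theta)/\theta$ transparent; the paper's route shows slightly more, namely that $\mathbb{E}|B_k|<\infty$ already for any $\tau$ below the mean throughput, which by Jensen exceeds $\Psi_X^{\tiny \mbox{MS}(\infty)}(\tilde\theta)/\theta$. The one point you must not skip is the one you flag yourself: the moment bound $\mathbb{E}[e^{\theta S_m}]\le Ce^{m\Psi_X^{\tiny \mbox{MS}(\infty)}(\tilde\theta)(1+o(1))}$ has to hold uniformly over the (random, history-dependent) joint state at $b_k$; the paper faces the identical issue and resolves it the same way, via the finite-state recurrent Markov chain structure of the channel-state/channel-index process cited from \cite{Chen&Tong:JSAC}, so your plan to invoke geometric ergodicity there is adequate.
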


\begin{proof}
In time slot $1$, the secondary user is always transmitting since $A_1=0<\tau$. Therefore the $k$th idle interval $I_k$ is ahead of the $k$th busy interval $B_k$. Thus the idle interval $I_k$ starts from slot $i_k$ (inclusion) and ends in slot $b_{k+1}-1$ (inclusion). Therefore the length of the idle interval $I_k$ is $|I_k|=b_{k+1}-i_k$.

According to the structure of MS-AT policy and Lemma \ref{lemma:structure1}, it holds that
\begin{equation*}
A_{i_k}\leq A_{i_k-1}+c<(i_k-1)\tau+c,
\end{equation*}
\begin{equation*}
A_{b_{k+1}}\geq A_{b_{k+1}-1}\geq (b_{k+1}-1)\tau,
\end{equation*}
and because during the idle interval $I_k$ nothing is transmitted, it follows that $A_{b_{k+1}}=A_{i_k}$. Therefore
\begin{equation*}
(b_{k+1}-1)\tau<(i_k-1)\tau+c,
\end{equation*}
which indicates
\begin{equation*}
|I_k|=b_{k+1}-i_k<\frac{c}{\tau}=\frac{1}{\tau}.
\end{equation*}

In other words, idle periods cannot be longer than $1/\tau$ since every $1/\tau$ slots there will be $\tau\cdot\frac{1}{\tau}=1$ packet due for transmission based on the debt based interpretation of the adaptive transmission policy.

The busy interval $B_k$ begins from slot $b_k$ (inclusion) and ends in slot $i_k-1$ (inclusion). According to Lemma \ref{lemma:structure1} it holds that $A_{b_k}<\tau b_k$ and
\begin{equation}\label{eqn:subtract}
A_{b_k}\geq A_{b_k-1}\geq\tau b_k-\tau.
\end{equation}
This can be interpreted as follows. In time slot $b_k$ the MS-AT policy has accumulated $A_{b_k}\in[\tau b_k-\tau,\tau b_k)$ successful transmissions, which is still short compared with the target $\tau b_k$.

By definition slot $i_k$ is the first slot after slot $b_k$ in which the MS-AT policy has no debt. Again according to Lemma \ref{lemma:structure1} it holds that $A_{i_k}\geq\tau i_k$ and
\begin{equation}\label{eqn:subtract2}
A_{i_k}\leq A_{i_k-1}+c<\tau (i_k-1)+c,
\end{equation}
\textit{i.e.}, the balance in slot $i_k$ is less than $c-\tau$.

Now consider the event $\{|B_k|=n\}$. It holds that
\begin{eqnarray}
\nonumber \{|B_k|=n\}&=&\{i_k=n+b_k\} \\
\label{eqn:set1}&\subset&\{A_{n+b_k}<\tau (n+b_k-1)+c\} \\
\label{eqn:set2}&\subset&\{A_{n+b_k}-A_{b_k}<\tau n+c\} \\
\label{eqn:set3}&=&\{\frac{A_{n+b_k}-A_{b_k}}{n}<\tau+\frac{c}{n}\},
\end{eqnarray}
where Eq. (\ref{eqn:set1}) and Eq. (\ref{eqn:set2}) make use of Eq. (\ref{eqn:subtract}) and Eq. (\ref{eqn:subtract2}), respectively.

By definition of busy interval the secondary user is always transmitting upon idle sensing results in the interval $[b_k,n-1+b_k]$. Therefore as $n$ tends to infinity the following convergence holds exponentially fast since the joint state which combines the channel state vector and the current channel index the secondary user is at evolves as a finite state recurrent Markov chain (see \cite{Chen&Tong:JSAC} Section IV.B),
\begin{eqnarray*}
\lim_{n\to\infty}\frac{1}{n}\mathbb{E}^{\tiny \mbox{MS}(\tau)}(A_{n+b_k}-A_{b_k})&=&\lim_{n\to\infty}\frac{1}{n}\mathbb{E}^{\tiny \mbox{MS}(\infty)}\sum_{t=1}^nR^{\tiny \mbox{MS}(\infty)}_t \\
>\Psi_X^{\tiny \mbox{MS}(\infty)}(\tilde{\theta})/\theta,
\end{eqnarray*}
where the middle term is the optimal throughput of the MS-AT policy with $\tau=\infty$, \textit{i.e.}, the secondary user always transmits upon idle sensing results.

Since $\Psi_X^{\tiny \mbox{MS}(\infty)}(\tilde{\theta})/\theta>\tau$, for $n$ large enough $\tau+c/n$ will lie to the left of $\Psi_X^{\tiny \mbox{MS}(\infty)}(\tilde{\theta})/\theta$, and thus to the left of $\lim_{n\to\infty}\frac{1}{n}\mathbb{E}^{\tiny \mbox{MS}(\infty)}\sum_{t=1}^nR^{\tiny \mbox{MS}(\infty)}_t$ with gap at least
$\lim_{n\to\infty}\frac{1}{n}\mathbb{E}^{\tiny \mbox{MS}(\infty)}\sum_{t=1}^nR^{\tiny \mbox{MS}(\infty)}_t-\Psi_X^{\tiny \mbox{MS}(\infty)}(\tilde{\theta})/\theta$.

Therefore the set in Eq. (\ref{eqn:set3}) corresponds to a deviation of at least $\lim_{n\to\infty}\frac{1}{n}\mathbb{E}^{\tiny \mbox{MS}(\infty)}\sum_{t=1}^nR^{\tiny \mbox{MS}(\infty)}_t-\Psi_X^{\tiny \mbox{MS}(\infty)}(\tilde{\theta})/\theta$ and has exponentially decaying probability in $n$, and so does the set $\{|B_k|=n\}$, which indicates that the lengths of the intervals $B_k$ have finite expectation.
\end{proof}

\begin{lemma}\label{lemma:finite}
For any $\tau<\Psi_X^{\tiny \mbox{MS}(\infty)}(\tilde{\theta})/\theta$, we have for the MS-AT policy with $\tau=\infty$ ($\mbox{MS}(\infty)$ policy)
\begin{equation}
\sup_{n}M^{\tiny \mbox{MS}(\infty)}_{R,n}(\theta)e^{-\theta\tau n}<\infty,
\end{equation}
where $M^{\tiny \mbox{MS}(\infty)}_{R,n}(\theta)=\mathbb{E}^{\tiny \mbox{MS}(\infty)}\exp(\theta\sum_{t=1}^nR^{\tiny \mbox{MS}(\infty)}_t)$.
\end{lemma}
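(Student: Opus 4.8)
The plan is to reduce the claim to the exponential growth rate of the sequence $M^{\tiny \mbox{MS}(\infty)}_{R,n}(\theta)e^{-\theta\tau n}$ and show that this rate is strictly negative, so that the sequence does not merely stay bounded but in fact decays to zero. First I would take logarithms and normalize by $n$,
\[
\frac{1}{n}\log\left(M^{\tiny \mbox{MS}(\infty)}_{R,n}(\theta)e^{-\theta\tau n}\right)=\frac{1}{n}\log M^{\tiny \mbox{MS}(\infty)}_{R,n}(\theta)-\theta\tau.
\]
The first term on the right converges, by definition, to the Gartner--Ellis limit $\Psi_R^{\tiny \mbox{MS}(\infty)}(\theta)$; this limit exists because under the $\mbox{MS}(\infty)$ policy the joint process combining the channel-state vector and the current channel index evolves as a finite-state recurrent Markov chain, exactly as invoked in the preceding lemma.

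Next I would substitute the identity $\Psi_R^{\tiny \mbox{MS}(\infty)}(\theta)=\Psi_X^{\tiny \mbox{MS}(\infty)}(\tilde{\theta})$ established in the remark following Lemma \ref{thm:1} (the last equality there follows from Eq. (\ref{eqn:connection})), which yields
\[
\lim_{n\to\infty}\frac{1}{n}\log\left(M^{\tiny \mbox{MS}(\infty)}_{R,n}(\theta)e^{-\theta\tau n}\right)=\Psi_X^{\tiny \mbox{MS}(\infty)}(\tilde{\theta})-\theta\tau=\theta\left(\frac{\Psi_X^{\tiny \mbox{MS}(\infty)}(\tilde{\theta})}{\theta}-\tau\right).
\]
The hypothesis $\tau<\Psi_X^{\tiny \mbox{MS}(\infty)}(\tilde{\theta})/\theta$ makes the bracketed factor strictly positive, and since $\theta=\log(\epsilon)/b<0$ the entire limiting rate is strictly negative.

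Finally, since this limiting exponential rate is negative, I would fix some $\delta>0$ with $\delta<-\theta\left(\Psi_X^{\tiny \mbox{MS}(\infty)}(\tilde{\theta})/\theta-\tau\right)$ and use the convergence above to conclude that there is an index $n_0$ beyond which $M^{\tiny \mbox{MS}(\infty)}_{R,n}(\theta)e^{-\theta\tau n}\le e^{\rho n}$ for some $\rho<0$, so these tail terms tend to zero. For the finitely many indices $n\le n_0$ each term is already finite, because $\theta<0$ and $R^{\tiny \mbox{MS}(\infty)}_t\ge0$ give $M^{\tiny \mbox{MS}(\infty)}_{R,n}(\theta)\le1$ while $e^{-\theta\tau n}$ is finite. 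A sequence of finite terms converging to zero is bounded, hence the supremum over all $n$ is finite. I expect the only point requiring care to be the bookkeeping of signs: because $\theta<0$, multiplying the positive gap $\Psi_X^{\tiny \mbox{MS}(\infty)}(\tilde{\theta})/\theta-\tau$ by $\theta$ flips it into a \emph{negative} decay rate, and it is precisely this sign reversal that converts the hypothesis on $\tau$ into the exponential decay that drives the argument; the substantive ingredients (existence of the Gartner--Ellis limit and the $R$-to-$X$ identity) are already in hand, so no genuine analytic difficulty remains.
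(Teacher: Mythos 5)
Your argument is correct and is essentially the paper's own proof: both identify the exponential growth rate of $M^{\tiny \mbox{MS}(\infty)}_{R,n}(\theta)e^{-\theta\tau n}$ via Eq.~(\ref{eqn:connection}) and the limit $\lim_n\frac{1}{n}\log M^{\tiny \mbox{MS}(\infty)}_{R,n}(\theta)=\Psi_X^{\tiny \mbox{MS}(\infty)}(\tilde{\theta})$, show the rate $\theta(\Psi_X^{\tiny \mbox{MS}(\infty)}(\tilde{\theta})/\theta-\tau)$ is strictly negative because $\theta<0$, and bound the finitely many initial terms separately. The paper merely carries out the same sign bookkeeping with an explicit $\epsilon$--$N$ argument rather than stating the limiting rate directly.
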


\begin{proof}
Following Eq. (\ref{eqn:connection}) one can easily obtain that
\begin{equation}\label{eqn:limit}
\lim_{n\to\infty}\frac{1}{n}\log
M^{\tiny \mbox{MS}(\infty)}_{R,n}(\theta)=\Psi_X^{\tiny \mbox{MS}(\infty)}(\tilde{\theta}).
\end{equation}
Therefore for any $\tau<\Psi_X^{\tiny \mbox{MS}(\infty)}(\tilde{\theta})/\theta$, there
exists $\epsilon>0$ such that
$\Psi_X^{\tiny \mbox{MS}(\infty)}(\tilde{\theta})/\theta-\epsilon-\tau>0$. By the limit in
Eq. (\ref{eqn:limit}), we have that there exists $N$ such that for all
$n>N$
\begin{equation}
\frac{1}{n\theta}\log
M^{\tiny \mbox{MS}(\infty)}_{R,n}(\theta)>\Psi^{\tiny \mbox{MS}(\infty)}_X(\tilde{\theta})/\theta-\epsilon,
\end{equation}
which implies
\begin{equation}
M^{\tiny \mbox{MS}(\infty)}_{R,n}(\theta)<e^{n\theta(\Psi^{\tiny \mbox{MS}(\infty)}_X(\tilde{\theta})/\theta-\epsilon)},
\end{equation}
and further for all $n>N$
\begin{equation}
M^{\tiny \mbox{MS}(\infty)}_{R,n}(\theta)e^{-\theta\tau
n}<e^{n\theta(\Psi^{\tiny \mbox{MS}(\infty)}_X(\tilde{\theta})/\theta-\epsilon-\tau)}\leq
e^{N\theta(\Psi^{\tiny \mbox{MS}(\infty)}_X(\tilde{\theta})/\theta-\epsilon-\tau)}.
\end{equation}
Therefore we conclude
\begin{eqnarray*}
\sup_{n}M^{\tiny \mbox{MS}(\infty)}_{R,n}(\theta)e^{-\theta\tau n} &\leq&
\max\{e^{N\theta(\Psi^{\tiny \mbox{MS}(\infty)}_X(\tilde{\theta})/\theta-\epsilon-\tau)},
\\
&&\max_{1\leq i\leq N}\{M^{\tiny \mbox{MS}(\infty)}_{R,i}(\theta)e^{-\theta\tau i}\}\}<\infty,
\end{eqnarray*}
thus proving the lemma.
\end{proof}

\begin{lemma}
For any $\tau<\Psi_X^{\tiny \mbox{MS}(\infty)}(\tilde{\theta})/\theta$, the effective bandwidth of the MS-AT policy with parameter $\tau$ is
no less than $\tau$.
\end{lemma}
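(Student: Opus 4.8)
The plan is to reduce the statement to a uniform, sub-exponential upper bound on $M^{\tiny \mbox{MS}(\tau)}_{R,n}(\theta)=\mathbb{E}^{\tiny \mbox{MS}(\tau)}\exp(\theta\sum_{t=1}^nR^{\tiny \mbox{MS}(\tau)}_t)$ and then to exploit the fact that, while the secondary user is in debt, MS-AT$(\tau)$ is pathwise identical to MS-AT$(\infty)$, so that Lemma~\ref{lemma:finite} applies. Write $S_n=\sum_{t=1}^nR^{\tiny \mbox{MS}(\tau)}_t$ for the cumulative number of successful transmissions through slot $n$; this coincides with the debt counter $A_{n}$ of Lemma~\ref{lemma:structure1} up to one slot, i.e.\ up to a bounded factor $e^{|\theta|}$ that we absorb into constants. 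Since $\theta<0$, the effective bandwidth $\lim_{n\to\infty}\frac{1}{n\theta}\log M^{\tiny \mbox{MS}(\tau)}_{R,n}(\theta)$ is at least $\tau$ as soon as one shows $M^{\tiny \mbox{MS}(\tau)}_{R,n}(\theta)\le e^{\theta\tau n}p(n)$ for some polynomially (indeed any sub-exponentially) growing $p(n)$, because dividing $\log M^{\tiny \mbox{MS}(\tau)}_{R,n}(\theta)\le \theta\tau n+\log p(n)$ by $n\theta<0$ flips the inequality and $\frac{\log p(n)}{n\theta}\to0$. Factoring out $e^{\theta\tau n}$, it therefore suffices to prove that the \emph{debt moment} $\mathbb{E}^{\tiny \mbox{MS}(\tau)}[\exp(-\theta(\tau n-S_n))]$ grows at most polynomially in $n$.

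First I would split this expectation according to the sign of the debt $\tau n-S_n$ at slot $n$, which by construction of MS-AT is nonpositive exactly when slot $n$ lies in an idle interval and positive exactly when it lies in a busy interval. On $\{\tau n-S_n\le0\}$ the integrand is at most $1$ (here $-\theta>0$), contributing at most $1$. The work is on $\{\tau n-S_n>0\}$, i.e.\ $n\in B_k$ for some $k$. Let $b_k$ be the first slot of that busy interval. By Lemma~\ref{lemma:structure1}, $S_{b_k-1}\ge\tau(b_k-1)$, and throughout $[b_k,n]$ the user is in debt and hence, under the myopic sensing rule shared by both policies, makes exactly the transmission decisions of MS-AT$(\infty)$ started from the information state at $b_k$. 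Writing $m=n-b_k+1$ and letting $\tilde S_m$ denote the corresponding MS$(\infty)$ transmission count over these $m$ slots, I get $\tau n-S_n\le \tau m-\tilde S_m$, so $\exp(-\theta(\tau n-S_n))\le \exp(\theta(\tilde S_m-\tau m))$ on this event.

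The final step is to sum over the possible start times $b_k=b\in\{1,\dots,n\}$ of the busy interval containing $n$. These events are disjoint, so after bounding each indicator by $1$ and recognizing $\tilde S_m$ as the MS$(\infty)$ output over horizon $m=n-b+1$, the busy-interval contribution is at most $\sum_{b=1}^{n}\mathbb{E}^{\tiny \mbox{MS}(\infty)}[\exp(\theta(\tilde S_{n-b+1}-\tau(n-b+1)))]=\sum_{m=1}^{n}M^{\tiny \mbox{MS}(\infty)}_{R,m}(\theta)e^{-\theta\tau m}$. By Lemma~\ref{lemma:finite} each summand is bounded by a single constant $C<\infty$, so the debt moment is at most $1+Cn$, and the desired bound holds with $p(n)=1+Cn$. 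This yields $\liminf_n \frac{1}{n\theta}\log M^{\tiny \mbox{MS}(\tau)}_{R,n}(\theta)\ge\tau$, which is the claim.

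I expect the main obstacle to be the \emph{random} length of the busy interval containing slot $n$: Lemma~\ref{lemma:finite} controls only fixed-horizon moments, so a direct random-horizon estimate is delicate. The device that dissolves this difficulty is the start-time decomposition above, which converts the single random-horizon moment into a sum of at most $n$ fixed-horizon MS$(\infty)$ moments, each uniformly bounded by $C$. A secondary technical point is that the busy interval inherits an arbitrary information state at $b_k$, whereas Lemma~\ref{lemma:finite} is stated for the canonical start; this is harmless because the joint (channel-state, sensing-index) chain is finite and recurrent, so its exponential moments are bounded uniformly over initial states by a constant multiple of the canonical one, which is again absorbed into $C$.
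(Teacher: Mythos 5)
Your proof is correct and follows essentially the same route as the paper's: both decompose according to whether slot $n$ falls in an idle interval (where the nonpositive debt, via Lemma~\ref{lemma:structure1}, bounds the integrand by a constant) or a busy interval (where MS-AT$(\tau)$ coincides pathwise with MS-AT$(\infty)$ and Lemma~\ref{lemma:finite} supplies the uniform bound), then divide by $n\theta<0$ to flip the inequality. Your start-time decomposition over the possible values of $b_k$, which trades the paper's single $\sup_k$ bound for a harmless polynomial factor $Cn$, is in fact a more explicit treatment of the random-horizon step that the paper handles implicitly, but it is the same argument in substance.
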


\begin{proof}
By the definition of effective bandwidth we have the effective
bandwidth of the secondary user is given by
\begin{equation}
a^{\tiny \mbox{MS}(\tau)}(\epsilon,b)=\lim_{n\to\infty}\frac{\log\mathbb{E}^{\tiny \mbox{MS}(\tau)}\exp(\theta\sum_{t=1}^nR^{\tiny \mbox{MS}(\tau)}_t)}{n\theta}.
\end{equation}
for buffer size $b\gg 1$ and $\theta=\log(\epsilon)/b$.

Define $T(n)$ to be the slot index that corresponds to the last slot before $n$ (inclusion) that is the start of an interval, \textit{i.e.},
\begin{equation}
T(n)=\max\{i\leq n:i=i_k\mbox{ or }i=b_k\mbox{ for some }k\}.
\end{equation}

Then the analysis deal with two possibilities, $T(n)=i_k$ for some $k$ and $T(n)=b_k$ for some $k$.

\begin{enumerate}
\item $T(n)=i_k$ for some $k$. Then we have
\begin{eqnarray*}
&&\mathbb{E}e^{\theta(\sum_{t=1}^nR_t-\tau n)} \\
%&=& \mathbb{E}e^{\theta(A_{n+1}-\tau n)} \\
&=& \mathbb{E}e^{\theta(A_{T(n)}-\tau T(n)+\sum_{k=T(n)}^{n}R_k-\tau(n-T(n)))} \\
&\leq& \mathbb{E}e^{\theta(\sum_{k=T(n)}^{n}R_k-\tau(n-T(n)))} \\
&\leq& \mathbb{E}e^{-\tau|I_k|\theta} \\
&\leq& \mathbb{E}e^{-\theta}
\end{eqnarray*}
where we used the inequalities $A_{i_k}\geq\tau i_k$ and $|I_k|<1/\tau$.
Therefore we have
\begin{eqnarray*}
\lim_{n\to\infty}\frac{1}{n}\log\mathbb{E}e^{\theta(\sum_{t=1}^nR_t-\tau
n)}\leq0,
\end{eqnarray*}
which leads to
\begin{eqnarray*}
\lim_{n\to\infty}\frac{1}{n\theta}\log\mathbb{E}e^{\theta\sum_{t=1}^nR_t}\geq\tau.
\end{eqnarray*}

\item $T(n)=b_k$ for some $k$. Then we have
\begin{eqnarray*}
&&\mathbb{E}e^{\theta(\sum_{t=1}^nR_t-\tau n)} \\
%&=& \mathbb{E}e^{\theta(A_{n+1}-\tau n)} \\
&=& \mathbb{E}e^{\theta(A_{T(n)}-\tau T(n)+\sum_{k=T(n)}^{n}R_k-\tau(n-T(n)))} \\
&\leq& \mathbb{E}e^{\theta(-\tau+\sum_{k=T(n)}^{n}R_k-\tau(n-T(n)))} \\
%&\leq& e^{-\theta\tau}M_{R,n-T(n)}(\theta)e^{-\theta\tau(n-T(n))} \\
&\leq& e^{-\theta\tau}\sup_k M_{R,k}(\theta)e^{-\theta\tau k}
\end{eqnarray*}
where we used the inequality $A_{b_k}-\tau
b_k\geq-\tau$ and Lemma \ref{lemma:finite}.

Therefore we have
\begin{eqnarray*}
\lim_{n\to\infty}\frac{1}{n}\log\mathbb{E}e^{\theta(\sum_{t=1}^nR_t-\tau
n)}\leq0,
\end{eqnarray*}
which also leads to
\begin{eqnarray*}
\lim_{n\to\infty}\frac{1}{n\theta}\log\mathbb{E}e^{\theta\sum_{t=1}^nR_t}\geq\tau.
\end{eqnarray*}
\end{enumerate}

Therefore we have proved that the effective bandwidth of the MS-AT
policy is at least as large as the parameter $\tau$.
\end{proof}

\begin{lemma}
The throughput of the MS-AT policy with parameter $\tau$ is at most $\tau$.
\end{lemma}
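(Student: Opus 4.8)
The plan is to prove the bound along every sample path and then pass to expectations, so that no large--deviation estimate is needed at all. The starting point is the bookkeeping identity relating throughput to the acknowledgement counter: since under MS$(\tau)$ every successful transmission raises the acknowledgement count by exactly $c$, and $c=1$ by assumption, the cumulative number of successful transmissions in the first $n$ slots equals the final counter value, $\sum_{t=1}^n r_t = A_{n+1}$, along every realization of the channel states. Thus it suffices to bound $A_{n+1}$ from above, where the lower case $r$ again denotes quantities conditioned on a sample path.

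Next I would invoke the debt condition built into the policy. A successful transmission in slot $t$ requires that the secondary user sense idle and then choose to transmit, and by construction it transmits upon an idle sensing result only when it is in debt, that is, only when $A_t<\tau t$ (if $A_t\geq\tau t$ it stays silent, exactly as recorded in Lemma \ref{lemma:structure1}). Hence $r_t=1$ forces $A_t<\tau t$. Let $t^\ast$ be the last slot no later than $n$ at which a successful transmission occurs; if no such slot exists then $A_{n+1}=A_1=0\leq\tau n$ and we are done. Otherwise the counter is unchanged between slot $t^\ast$ and slot $n+1$, so
\[
A_{n+1}=A_{t^\ast}+c<\tau t^\ast+c\leq\tau n+c,
\]
using $A_{t^\ast}<\tau t^\ast$ together with $t^\ast\leq n$. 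Combining this with the identity above yields the pathwise bound $\sum_{t=1}^n r_t\leq\tau n+c$.

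Finally I would take expectations, divide by $n$, and let $n\to\infty$, obtaining $\frac{1}{n}\mathbb{E}^{\tiny \mbox{MS}(\tau)}\sum_{t=1}^n R_t^{\tiny \mbox{MS}(\tau)}\leq\tau+c/n\to\tau$, which is the claimed throughput upper bound. I expect no genuine obstacle here: the argument is entirely deterministic and stands in contrast with the matching lower bound, where the finiteness of the busy--interval lengths (and hence large--deviation control of $\{|B_k|=n\}$) was essential. The only points requiring care are the convention $A_1=0$, which disposes of the empty case cleanly, and the fact that $A_t$ counts acknowledgements only up to slot $t-1$, which is precisely why the relevant counter value is $A_{n+1}$ rather than $A_n$.
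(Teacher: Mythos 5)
Your proof is correct, and it takes a genuinely different and more elementary route than the paper's. The paper bounds $\mathbb{E}(\sum_{t=1}^n R_t-\tau n)$ by conditioning on whether the last interval boundary $T(n)$ before slot $n$ starts an idle interval or a busy interval: in the idle case it uses the bound $A_{i_k}-\tau i_k<c-\tau$ plus the fact that the user is silent on $[T(n),n]$, and in the busy case it bounds the residual by $c\,\mathbb{E}|B_k|$ and invokes the finiteness of the expected busy-interval length --- a fact that was itself established only via a large-deviation estimate under the hypothesis $\tau<\Psi_X^{\tiny \mbox{MS}(\infty)}(\tilde{\theta})/\theta$. Your argument collapses all of this into the single deterministic observation that a successful transmission in slot $t$ forces the debt condition $A_t<\tau t$, so looking at the last successful transmission before $n$ gives the pathwise bound $\sum_{t=1}^n r_t=A_{n+1}<\tau n+c$, after which one just takes expectations. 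This buys two things: (i) no case split and no appeal to $\mathbb{E}|B_k|<\infty$, so the proof is self-contained; and (ii) validity for \emph{all} $\tau$, matching the lemma as stated (which, unlike its neighbors, carries no restriction $\tau<\Psi_X^{\tiny \mbox{MS}(\infty)}(\tilde{\theta})/\theta$), whereas the paper's argument implicitly inherits that restriction through the busy-interval lemma. It is also consistent with the paper's own machinery: the inequality $A_{t+1}<\tau t+c$ you derive is exactly Eq.~(\ref{eqn:lessthan}), which the paper uses in the feasibility lemma but, somewhat curiously, not here.
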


\begin{proof}
The throughput of the secondary user is given by $\lim_{n\to\infty}\frac{1}{n}\mathbb{E}^{\tiny \mbox{MS}(\tau)}\sum_{t=1}^nR^{\tiny \mbox{MS}(\tau)}_t$.
Again split the analysis into two possibilities, $T(n)=i_k$ for some $k$ and $T(n)=b_k$ for some $k$.

\begin{enumerate}
\item $T(n)=i_k$ for some $k$. Then we have
\begin{eqnarray}
\nonumber&&\mathbb{E}(\sum_{t=1}^nR_t-\tau n) \\
%&=& \mathbb{E}(A_{n+1}-\tau n) \\
\nonumber&=& \mathbb{E}(A_{T(n)}-\tau T(n)+\sum_{k=T(n)}^{n}R_k-\tau(n-T(n))) \\
\label{eqn:notx}&\leq& \mathbb{E}(c-\tau),
\end{eqnarray}
where Eq. (\ref{eqn:notx}) is due to the fact that from slot $T(n)$ to slot $n$ the secondary user stays silent. Thus it holds that
\begin{equation*}
\lim_{n\to\infty}\frac{1}{n}\mathbb{E}(\sum_{t=1}^nR_t-\tau n)\leq0,
\end{equation*}
which leads to
\begin{equation*}
\lim_{n\to\infty}\frac{1}{n}\mathbb{E}\sum_{t=1}^nR_t\leq\tau.
\end{equation*}

\item $T(n)=b_k$ for some $k$. Then we have
\begin{eqnarray*}
&&\mathbb{E}(\sum_{t=1}^nR_t-\tau n) \\
%&=& \mathbb{E}(A_{n+1}-\tau n) \\
&=& \mathbb{E}(A_{T(n)}-\tau T(n)+\sum_{k=T(n)}^{n}R_k-\tau(n-T(n))) \\
&\leq& \mathbb{E}\sum_{k=T(n)}^{n}c \\
&\leq& c\mathbb{E}|B_k|.
\end{eqnarray*}

Due to the fact that the expected length of a busy period is finite, we have in parallel with the previous case
\begin{equation*}
\lim_{n\to\infty}\frac{1}{n}\mathbb{E}(\sum_{t=1}^nR_t-\tau n)\leq0,
\end{equation*}
which leads to
\begin{equation*}
\lim_{n\to\infty}\frac{1}{n\theta}\mathbb{E}\sum_{t=1}^nR_t\leq\tau,
\end{equation*}
\end{enumerate}

Therefore we have proved that the throughput of the MS-AT policy is
at most as large as the parameter $\tau$.
\end{proof}

\begin{corollary}\label{col:1}
For any $\tau<\Psi_X^{\tiny \mbox{MS}(\infty)}(\tilde{\theta})/\theta$, the effective bandwidth and throughput of MS-AT with parameter $\tau$ is both $\tau$.
\end{corollary}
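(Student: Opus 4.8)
The plan is to prove the corollary by a squeeze (sandwich) argument that combines the two immediately preceding lemmas with the general ordering between effective bandwidth and throughput established in Section~\ref{sec:two}. Throughout, $\tau$ is restricted to the range $\tau<\Psi_X^{\tiny \mbox{MS}(\infty)}(\tilde{\theta})/\theta$ asserted in the hypothesis, which is exactly the range in which both preceding lemmas are valid.

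First I would invoke the lemma stating that for $\tau<\Psi_X^{\tiny \mbox{MS}(\infty)}(\tilde{\theta})/\theta$ the effective bandwidth of MS-AT with parameter $\tau$ is no less than $\tau$, giving the lower bound $a^{\tiny \mbox{MS}(\tau)}(\epsilon,b)\geq\tau$. Next I would invoke the lemma stating that the throughput of the same policy is at most $\tau$, giving the upper bound $\lim_{n\to\infty}\frac{1}{n}\mathbb{E}^{\tiny \mbox{MS}(\tau)}\sum_{t=1}^nR^{\tiny \mbox{MS}(\tau)}_t\leq\tau$.

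The final ingredient is the inequality $a^{\tiny \mbox{MS}(\tau)}(\epsilon,b)\leq\lim_{n\to\infty}\frac{1}{n}\mathbb{E}^{\tiny \mbox{MS}(\tau)}\sum_{t=1}^nR^{\tiny \mbox{MS}(\tau)}_t$, that is, the effective bandwidth never exceeds the throughput, which follows from Jensen's inequality as recorded immediately after Eq.~(\ref{eq:eb}). Chaining the three bounds produces $\tau\leq a^{\tiny \mbox{MS}(\tau)}(\epsilon,b)\leq\lim_{n\to\infty}\frac{1}{n}\mathbb{E}^{\tiny \mbox{MS}(\tau)}\sum_{t=1}^nR^{\tiny \mbox{MS}(\tau)}_t\leq\tau$, so every inequality in the chain is forced to be an equality, and both the effective bandwidth and the throughput equal $\tau$.

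I do not expect a substantive obstacle here, since all of the analytic work has already been carried out in the preceding lemmas; the corollary is purely a matter of assembling three inequalities into one chain. The only point meriting care is verifying that the hypothesis $\tau<\Psi_X^{\tiny \mbox{MS}(\infty)}(\tilde{\theta})/\theta$ is simultaneously the domain of validity of the lower-bound lemma (whose proof relied on $|I_k|<1/\tau$ and on the finiteness of $\mathbb{E}|B_k|$) and of the finite-expectation result underlying the throughput lemma, so that the squeeze is legitimate across the entire stated range of $\tau$.
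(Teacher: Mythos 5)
Your proposal is correct and follows essentially the same route as the paper's own proof: both chain the lower bound on effective bandwidth from the preceding lemma, the upper bound on throughput from the other preceding lemma, and the Jensen-derived inequality that effective bandwidth never exceeds throughput, forcing all quantities to equal $\tau$. Your added remark about checking that the hypothesis $\tau<\Psi_X^{\tiny \mbox{MS}(\infty)}(\tilde{\theta})/\theta$ covers the domain of validity of both lemmas is a reasonable extra care but does not change the argument.
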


\begin{proof}
Application of Jensen's inequality implies that for the same opportunistic access policy $\pi$ the effective
bandwidth is no greater than the throughput
$\lim_{n\to\infty}\frac{1}{n}\mathbb{E}_\pi\sum_{t=1}^nR^\pi_t$. However,
we have already established that the throughput of MS-AT is no greater than
the parameter $\tau$ and the effective bandwidth is no less than the
parameter $\tau$. Therefore combining the two facts, we conclude that the throughput and the effective bandwidth are both equal to
the parameter $\tau$.
\end{proof}

\begin{lemma}
For any collision parameter $\gamma$, the following choice of parameter
$\tau$
\begin{equation}\label{eqn:throughputgiven}
\tau=\frac{N\gamma\exp(-\lambda T)(1-v(0)\exp(-\lambda
T))}{1-\exp(-\lambda
T)}
\end{equation}
is feasible.
\end{lemma}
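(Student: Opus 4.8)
The plan is to show that the MS-AT policy $\pi$ with the prescribed $\tau$ lies in $\boldsymbol{\Pi}(\gamma)$, i.e.\ that $C_{\pi,i}\le\gamma$ for every primary user $i$. The starting point is to translate the long-run collision rate into the long-run throughput, since the throughput of MS-AT with parameter $\tau$ has already been bounded above by $\tau$ in the preceding lemma (and, with $c=1$, the cumulative acknowledgement count $A_n$ coincides with $\sum_{t=1}^n R^{\tiny \mbox{MS}(\tau)}_t$ up to a vanishing index shift).

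First I would compute the per-slot conditional probabilities of the two outcomes of an attempted transmission. Whenever the secondary user transmits, it has just sensed the channel idle at the start of the slot; because the idle holding time is exponential with rate $\lambda$, the channel stays idle throughout the slot of length $T$ with probability $\exp(-\lambda T)$ and otherwise turns busy, producing a collision. Hence each idle-sensing transmission yields an acknowledgement with probability $\exp(-\lambda T)$ and a collision with probability $1-\exp(-\lambda T)$. Summing over the first $n$ slots and over all $N$ channels, the expected total number of collisions equals $\frac{1-\exp(-\lambda T)}{\exp(-\lambda T)}$ times the expected number of acknowledgements $\mathbb{E}[A_n]$.

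Next I would invoke the symmetry of the problem. Since the $N$ channels are i.i.d.\ and the myopic sensing rule treats them symmetrically, the combined chain of channel-state vector and current channel index is ergodic and invariant in distribution under cyclic relabeling of the channels; consequently the per-channel collision rate $\lim_n\frac{1}{n}\mathbb{E}\sum_{t=1}^n\chi_{i,t}$ is the same for every $i$ and equals $1/N$ of the aggregate rate. Combining this with the previous step and the throughput bound $\lim_n\frac{1}{n}\mathbb{E}[A_n]\le\tau$ gives
\[
\lim_{n\to\infty}\frac{1}{n}\mathbb{E}\sum_{t=1}^n\chi_{i,t}\le\frac{1}{N}\cdot\frac{1-\exp(-\lambda T)}{\exp(-\lambda T)}\,\tau.
\]

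Finally I would substitute this bound into the definition of $C_{\pi,i}$ and plug in the prescribed value of $\tau$. After multiplying by the scale factor $\frac{1}{1-v(0)\exp(-\lambda T)}$, the factors $N$, $\exp(-\lambda T)$, $1-\exp(-\lambda T)$, and $1-v(0)\exp(-\lambda T)$ all cancel, leaving $C_{\pi,i}\le\gamma$, which is exactly feasibility; equality holds whenever the throughput equals $\tau$ (the regime $\tau<\Psi_X^{\tiny \mbox{MS}(\infty)}(\tilde{\theta})/\theta$ of Corollary \ref{col:1}), and the inequality is strict when the loose-$\gamma$ throughput ceiling forces the throughput below $\tau$. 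The main obstacle is the symmetry step: one must argue rigorously that collisions are apportioned equally among the $N$ primary users, which rests on the ergodicity and cyclic symmetry of the combined finite-state Markov chain underlying MS-AT rather than on any per-slot independence.
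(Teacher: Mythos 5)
Your proposal is correct and follows essentially the same route as the paper: convert successful transmissions to collisions via the ratio $\frac{1-\exp(-\lambda T)}{\exp(-\lambda T)}$, bound the long-run acknowledgement rate by $\tau$, apply the scaling factor, and split the aggregate budget $N\gamma$ equally among channels by symmetry. The only cosmetic difference is that the paper bounds the acknowledgement count directly via the pathwise inequality $A_{t+1}<\tau t+c$ rather than citing the throughput lemma, and it is similarly terse about the symmetry/ergodicity step you flag.
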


\begin{proof}
The MS-AT policy only transmits upon idle sensing results. Therefore for any horizon $n$ the ratio between the expected number of total successful transmissions and the expected number of total collisions is $\frac{1-\exp(-\lambda
T)}{\exp(-\lambda T)}$. With MS-AT policy the total number of successful transmissions the secondary user achieves during the first $t$ slots satisfies
\begin{equation}\label{eqn:lessthan}
A_{t+1}<\tau t+c.
\end{equation}
Taking the expectation of Eq. (\ref{eqn:lessthan}) implies that the expected number of total successful transmissions is upper bounded by $\tau t+c$ and thus the expected number of total collisions is bounded by $\frac{(\tau t+c)(1-\exp(-\lambda T))}{\exp(-\lambda
T)}$. Then we divide this upper bound expression by $t$, take the limit $t\to\infty$ and take into account the scaling coefficient $1-v(0)\exp(-\lambda
T)$ (see Section \ref{sec:collision}). Now we obtain that the sum of the collision $C_{{\tiny \mbox{MS}(\tau)},i}\leq N\gamma$.

Note that the channel sensing policy and the transmission policy is symmetric with respect to the $N$ channels. Therefore the lemma is proved.
\end{proof}

Then we are ready to prove Theorem \ref{thm:tightthm}.

\begin{proof}
Note that the parameter $\tau(\gamma)$ given by Eq. (\ref{eqn:throughputgiven}) is no less
than the best throughput for any opportunistic access policy under collision limit $\gamma$ (see \cite{Chen&Tong:JSAC}).

We also know from Corollary \ref{col:1} that
for any $\tau<\Psi_X^{\tiny \mbox{MS}(\infty)}(\tilde{\theta})/\theta$ the effective bandwidth and throughput of MS-AT with parameter $\tau$ is both $\tau$.
Therefore for any $\tau<\Psi_X^{\tiny \mbox{MS}(\infty)}(\tilde{\theta})/\theta$ the effective bandwidth of MS-AT with parameter $\tau$ matches with the optimal throughput upper bound, thus optimal with respect to effective bandwidth.

Combining this fact with Lemma \ref{lemma:mono} and the fact that MS-AT$(\infty)$ has optimal effective bandwidth $\Psi_X^{\tiny \mbox{MS}(\infty)}(\tilde{\theta})/\theta$ for loose collision parameters, we conclude that for any
collision parameter $\gamma$ the parameter $\tau$ given by
\begin{equation*}
\tau=\frac{N\gamma\exp(-\lambda T)(1-v(0)\exp(-\lambda
T))}{1-\exp(-\lambda
T)}
\end{equation*}
achieves the optimal effective bandwidth
\begin{equation*}
\mbox{\textmd{EB}}^\ast=\min\{\frac{N\gamma\exp(-\lambda T)(1-v(0)\exp(-\lambda
T))}{1-\exp(-\lambda
T)},\frac{\Psi_X^{\tiny \mbox{MS}(\infty)}(\tilde{\theta})}{\theta}\}.
\end{equation*}

The throughput optimality of the MS-AT policy can be proved similarly.
\end{proof}

\section{Simulation Results}\label{sec:simu}

We plot the throughput and effective bandwidth versus the collision parameter
$\gamma$ for MS-AT. In comparison, we also plot the throughput and
effective bandwidth versus the collision parameter
$\gamma$ for MS-MT, the policy with myopic sensing but memoryless transmission.
In the simulation the number of primary channel is set to be $N=2$.
The slot length in the simulation is taken to be
$T=0.25\textmd{ms}$. The channel parameters we use are $\mu=1/2\textmd{ms}^{-1}$ and
$\lambda=1/3\textmd{ms}^{-1}$. The effective bandwidth parameter $\theta=\log(\epsilon)/b$ is taken to be -0.08.

Fig. \ref{fig:Two_channels} depicts the throughput and effective bandwidth versus the collision parameter $\gamma$ for
the MS-AT and MS-MT policies. The plot shows that the effective bandwidth and throughput
matches up to a certain point for MS-AT policy, and then levels up at optimal effective bandwidth for loose collision parameter, validating the optimality result for MS-AT policy.
On the other hand, the throughput and effective bandwidth for
the MS-MT policy deviates in the entire $\gamma$-axis, showing that the MS-MT policy is indeed suboptimal when the collision constraint is tight. On the other hand, when the collision constraint is loose, the two policies both give the optimal effective bandwidth. The optimal effective bandwidth deviates from the throughput in the loose collision regime, as predicted in the presentation.
Also note that in Fig. \ref{fig:Two_channels} the optimal throughput as well as the effective bandwidth is first
linearly increasing in $\gamma$ and then levels up.

\begin{figure}[t]
\centering \subfigure[MS-AT] {
 \label{fig:Two_channels:a}
 \includegraphics[width=1.5in]{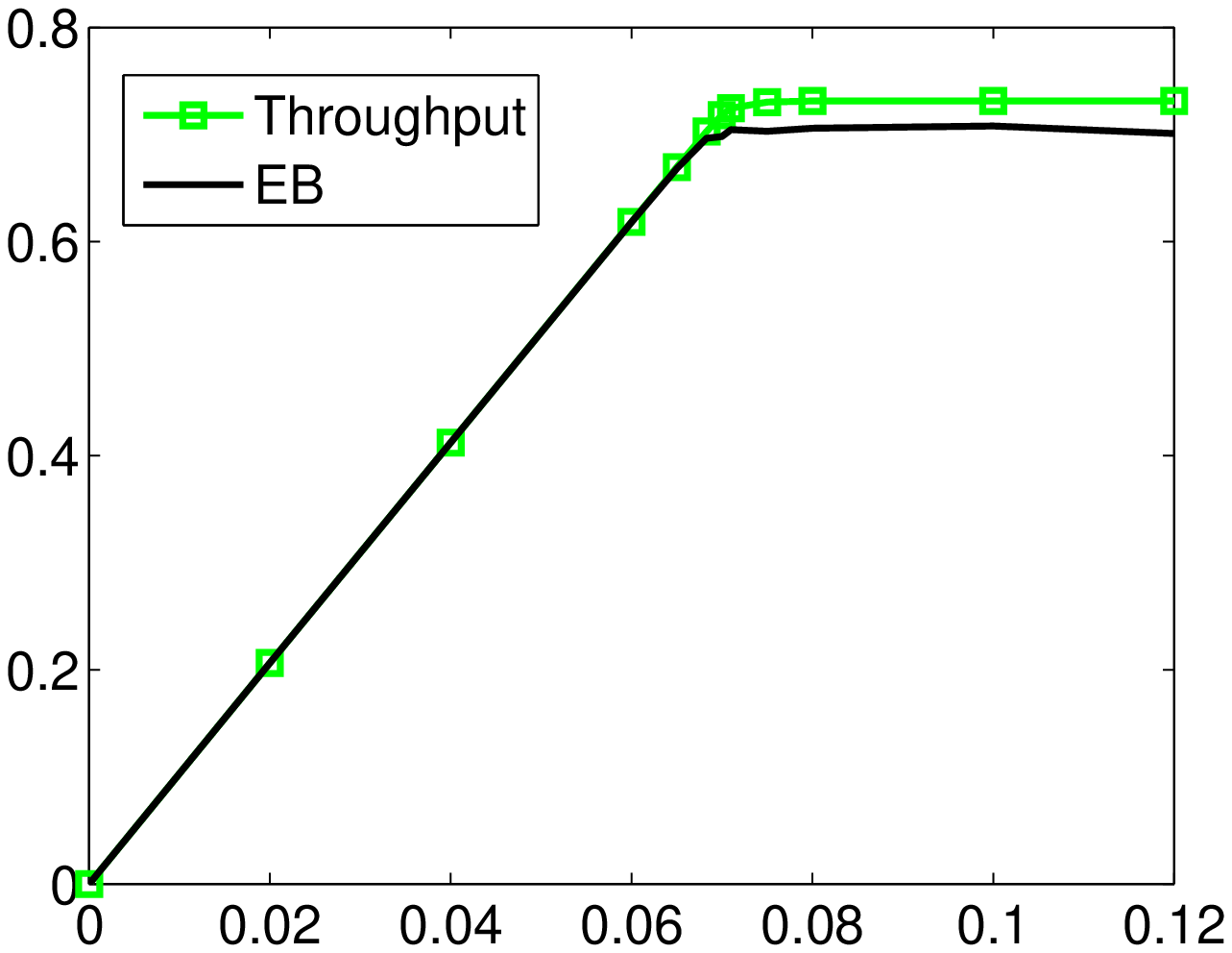}
} \subfigure[MS-MT] {
 \label{fig:Two_channels:b}
 \includegraphics[width=1.5in]{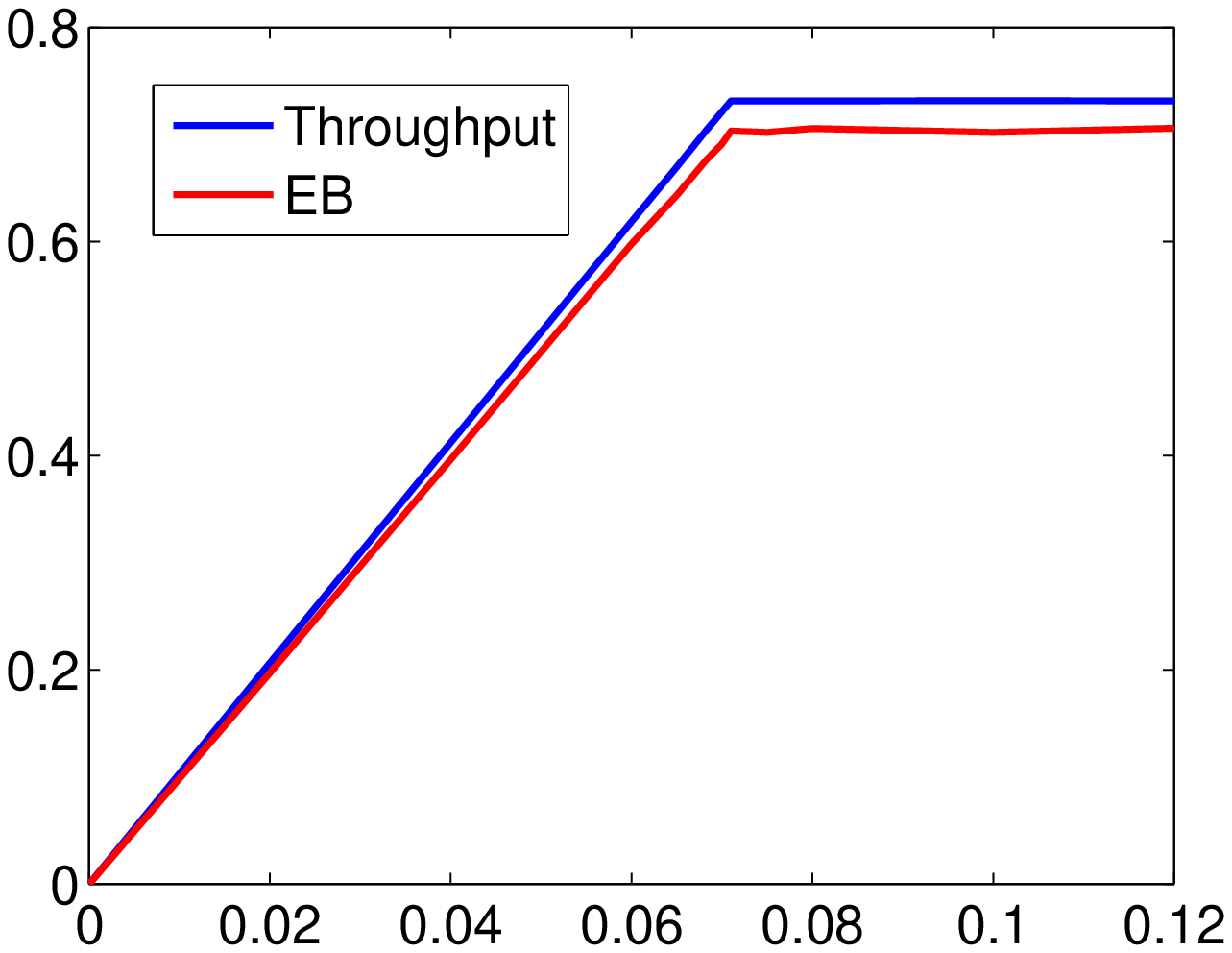}
}\caption{Throughput and Effective bandwidth of MS-AT and MS-MT.}
\label{fig:Two_channels}
\end{figure}

The empirical distribution of $A_t$ is plotted in Fig. \ref{fig:Two_channel}. It can be observed that under MS-AT policy, the distribution of $A_t$ is more condensed near the target $\tau t$, whereas under MS-MT policy, the distribution of $A_t$ is more far apart from $\tau t$.

\begin{figure}[t]
\centering \subfigure[MS-AT] {
 \label{fig:Two_channel:a}
 \includegraphics[width=1.5in]{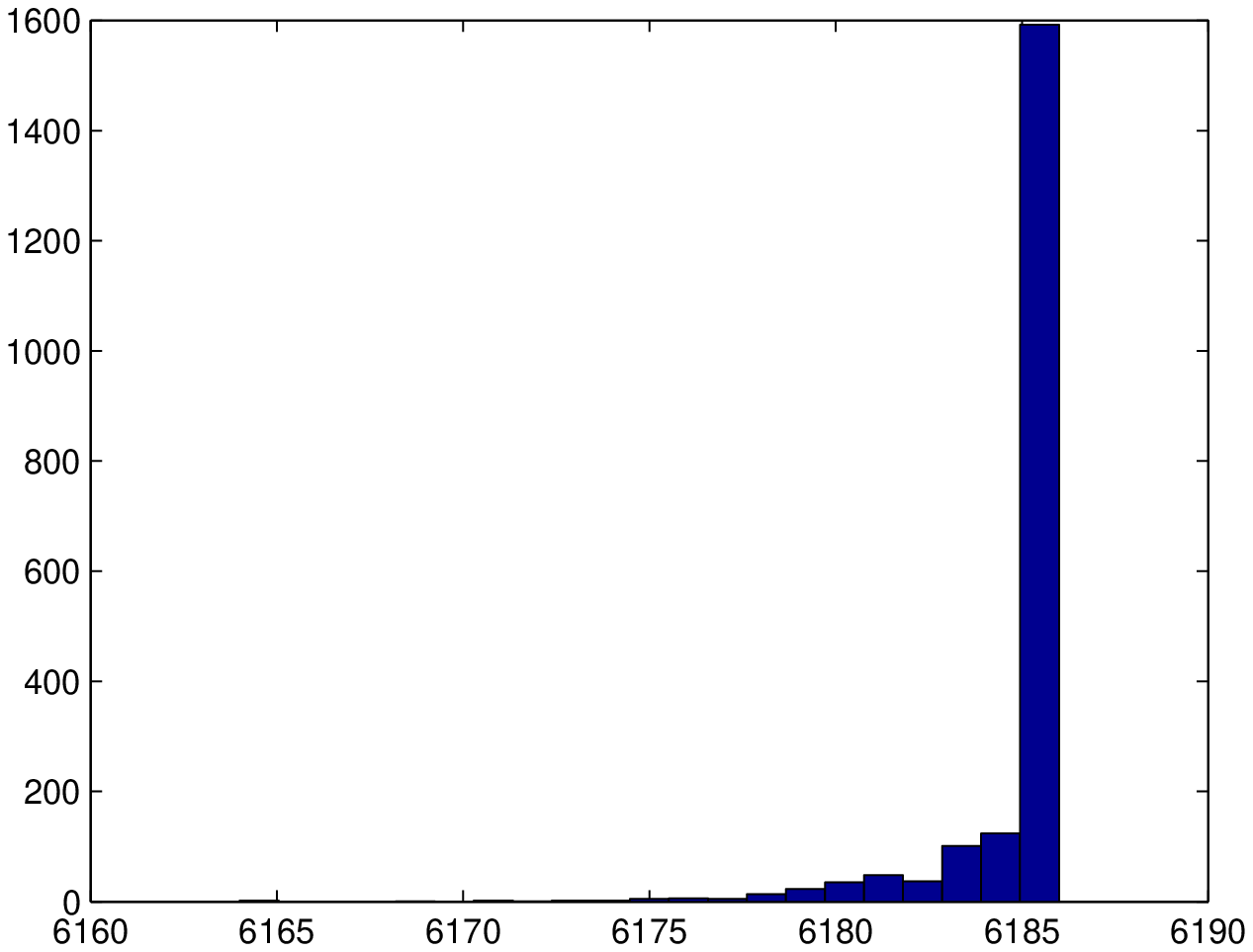}
} \subfigure[MS-MT] {
 \label{fig:Two_channel:b}
 \includegraphics[width=1.5in]{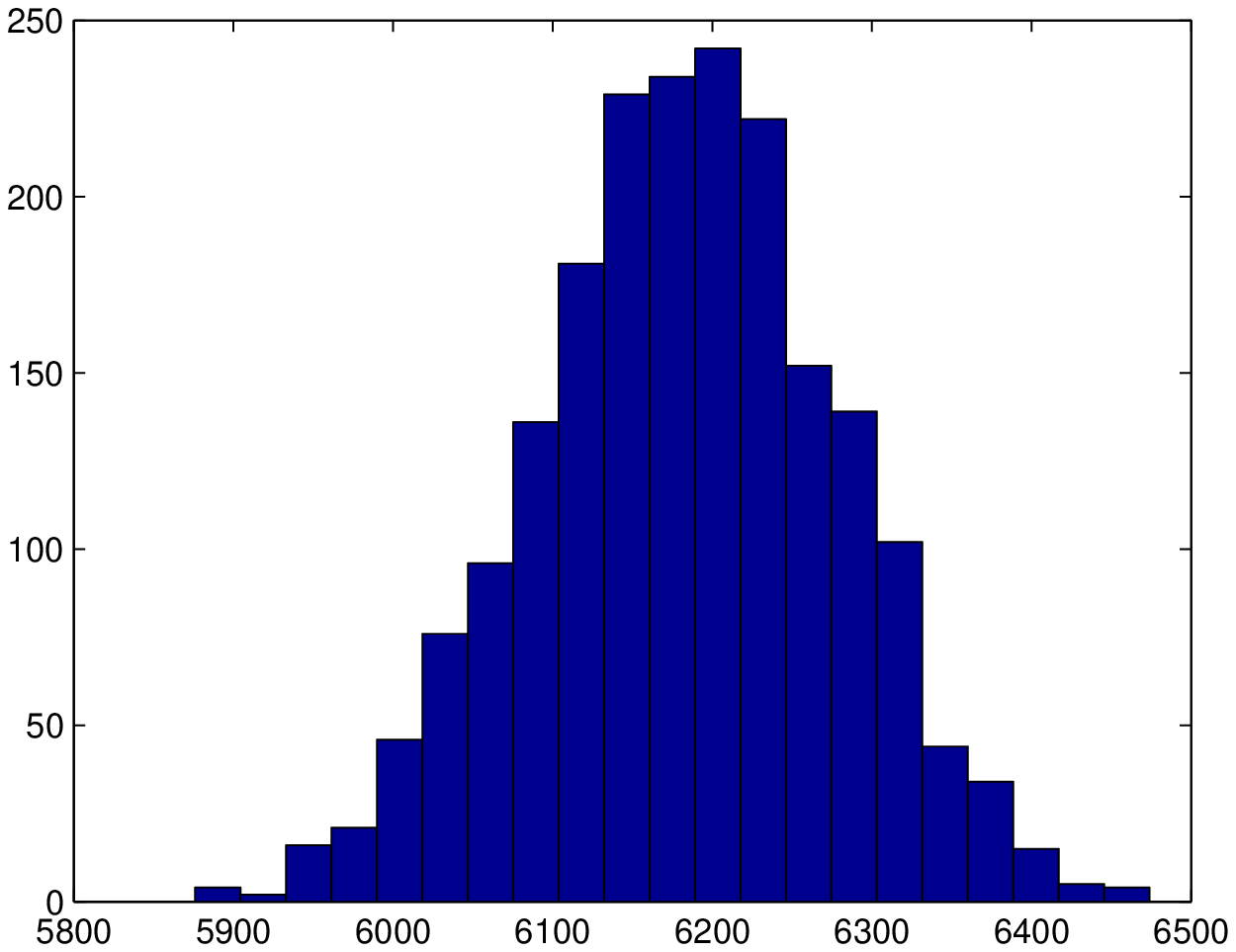}
}\caption{Empirical distribution of MS-AT and MS-MT.}
\label{fig:Two_channel}
\end{figure}

\section{Conclusions}

Multichannel opportunistic access of homogeneous continuous time Markov
channels is considered. The objective is to design an opportunistic access policy
that maximizes the effective bandwidth under the interference
constraints. This paper shows that a myopic sensing plus adaptive
transmission policy is optimal with respect to both effective bandwidth and throughput under all levels of collision constraint. The
optimality result may find applications in cognitive radio networks
for spectrum overlay with QoS requirement. Although we focus on single user scenario in this paper, the technique can be extended to multiuser scenario and similar results can be established for the multiuser effective bandwidth region.

\bibliographystyle{ieeetr}
{\small
\bibliography{INFOCOM12}

\begin{thebibliography}{10}

\bibitem{Zhao&Sadler:07SPM}
Q.~Zhao and B.~Sadler, ``{A survey of dynamic spectrum access},'' {\em IEEE
  Signal Processing Magazine}, vol.~24, no.~3, 2007.

\bibitem{Wu:03}
D.~Wu and R.~Negi, ``Effective capacity: A wireless link model for support of
  quality of service,'' {\em IEEE Trans. Wireless Communications}, vol.~24,
  no.~5, 2003.

\bibitem{Ying:06}
L.~Ying, R.~Srikant, A.~Eryilmaz, and G.~Dullerud, ``A large deviations
  analysis of scheduling in wireless networks,'' {\em IEEE Trans. Information
  Theory}, vol.~52, no.~11, 2006.

\bibitem{Shakkottai:TAC08}
S.~Shakkottai, ``Effective capacity and {QoS} for wireless scheduling,'' {\em
  IEEE Transactions on Automatic Control}, vol.~53, no.~3, 2008.

\bibitem{Kelly:91}
F.~P. Kelly, ``Effective bandwidths at multi-class queues,'' {\em Queueing
  Systems}, vol.~9, pp.~5--15, 1991.

\bibitem{Hui:88JSAC}
J.~Y. Hui, ``Resource allocation for broadband networks,'' {\em IEEE Journal on
  Selected Areas in Communications}, vol.~6, pp.~1598--1608, 1988.

\bibitem{Elwalid&Mitra:93TON}
A.~Elwalid and D.~Mitra, ``Effective bandwidth of general markovian traffic
  sources and admission control of high speed networks,'' {\em IEEE/ACM
  Transactions on Networking}, vol.~1, no.~3, pp.~329--343, 1993.

\bibitem{Glynn:94JAP}
P.~Glynn and W.~Whitt, ``Logarithmic asymptotics for steady-state tail
  probabilities in a single-server queue,'' {\em Journal of Applied
  Probability}, 1994.

\bibitem{Veciana&Walrand:95QS}
G.~de~Veciana and J.~C. Walrand, ``Effective bandwidths: Call admission,
  traffic policing and filtering for atm networks,'' {\em Queueing Systems},
  vol.~20, pp.~37--59, 1995.

\bibitem{Shwartz&Weiss:95book}
A.~Shwartz and A.~Weiss, {\em Large deviations for performance analysis:
  queues, communications, and computing}.
\newblock CRC Press, 1995.

\bibitem{Kelly:96}
F.~Kelly, ``Notes on effective bandwidths,'' in {\em Stochastic Networks},
  1996.

\bibitem{Zhao&Krishnamachari&Liu:08TWC}
Q.~Zhao, B.~Krishnamachari, and K.~Liu, ``On myopic sensing for multi-channel
  opportunistic access: structure, optimality, and performance,'' {\em IEEE
  Trans. Wireless Comm.}, vol.~7, no.~12, 2008.

\bibitem{Hou&Borkar&Kumar:INFOCOM09}
I.~H. Hou, V.~Borkar, and P.~R. Kumar, ``A theory of {QoS} for wireless,'' in
  {\em Proceedings of INFOCOM 2009}, (Rio de Janeiro, Brazil), April 2009.

\bibitem{Hou&Kumar:MobiHoc09}
I.~H. Hou and P.~R. Kumar, ``Admission control and scheduling for {QoS}
  guarantees for variable-bit-rate applications on wireless channels,'' in {\em
  Proceedings of MobiHoc 2009}, pp.~175--184, May 2009.

\bibitem{Hou&Kumar:INFOCOM10}
I.~H. Hou and P.~R. Kumar, ``Scheduling heterogeneous real-time traffic over
  fading wireless channels,'' in {\em Proceedings of INFOCOM 2010}, (San Diego,
  CA), March 2010.

\bibitem{Hernandez&Marcus:96SCL}
D.~Hernandez and S.~Marcus, ``Risk-sensitive control of markov processes in
  countable state space,'' {\em Syst. Control Lett.}, no.~29, 1996.

\bibitem{Chang:00book}
C.~S. Chang, {\em Performance Guarantees in Communications Networks}.
\newblock Springer Verlag, 2000.

\bibitem{Zhao&Tong&Swami&Chen:JSAC07}
Q.~Zhao, L.~Tong, A.~Swami, and Y.~Chen, ``{Decentralized cognitive MAC for
  opportunistic spectrum access in ad hoc networks: a POMDP framework},'' {\em
  IEEE J. Select. Comm.}, vol.~25, no.~4, 2007.

\bibitem{Ahmad&Liu&Javidi&Zhao&Krishnamachari:09IT}
S.~Ahmad, M.~Liu, T.~Javidi, Q.~Zhao, and B.~Krishnamachari, ``{Optimality of
  myopic sensing in multi-channel opportunistic access},'' {\em IEEE Trans.
  Information Theory}, vol.~55, no.~9, 2009.

\bibitem{Geirhofer&Tong&Sadler:07COMMag}
S.~Geirhofer, L.~Tong, and B.~Sadler, ``{Dynamic spectrum access in the time
  domain: modeling and exploiting whitespace},'' {\em {IEEE Communications
  Magazine}}, vol.~45, no.~5, 2007.

\bibitem{Geirhofer&Tong&Sadler:08MILCOM}
S.~Geirhofer, L.~Tong, and B.~Sadler, ``Interference-aware ofdma resource
  allocation: A predictive approach,'' in {\em IEEE MILCOM 2008}, pp.~1--7,
  November 2008.

\bibitem{Zhao&Geirhofer&Tong&Sadler:08SP}
Q.~Zhao, S.~Geirhofer, L.~Tong, and B.~Sadler, ``{Opportunistic spectrum access
  via periodic channel sensing},'' {\em IEEE Trans. Signal Processing},
  vol.~36, no.~2, 2008.

\bibitem{Li&Zhao&Guan&Tong:10ICC}
X.~Li, Q.~Zhao, X.~Guan, and L.~Tong, ``On the optimality of memoryless
  cognitive access with periodic spectrum sensing,'' in {\em IEEE International
  Conference on Communications}, 2010.

\bibitem{Li&Zhao&Guan&Tong:09JSAC}
X.~Li, Q.~Zhao, X.~Guan, and L.~Tong, ``{Optimal cognitive access of Markovian
  channels under tight collision constraints},'' {\em IEEE J. Selected Areas on
  Communications}, 2011.

\bibitem{Chen&Tong:JSAC}
S.~Chen and L.~Tong, ``Maximum throughput region of multiuser cognitive access
  of continuous time markovian channels,''
\newblock accepted by IEEE J. Selected Areas on Communications.

\bibitem{Laourine&Chen&Tong:10INFOCOM}
A.~Laourine, S.~Chen, and L.~Tong, ``Queuing analysis in multichannel cognitive
  spectrum access: A large deviation approach,'' in {\em IEEE INFOCOM}, (San
  Diego, CA), March 2010.

\bibitem{Chen&Tong:10ITA}
S.~Chen and L.~Tong, ``{Multiuser cognitive access of continuous time Markov
  channels: maximum throughput and effective bandwidth regions},'' in {\em
  Proc. of the UCSD Workshop on Information Theory and its Applications}, (San
  Diego, CA), Feb 2010.

\bibitem{Wang&etal:INFOCOM10}
S.~Wang, J.~Zhang, and L.~Tong, ``Delay analysis for cognitive radio networks
  with random access: a fluid queue view,'' in {\em Proc. 2010 IEEE INFOCOM},
  March 2010.

\bibitem{Sladky:09}
K.~Sladky, ``Constrained risk-sensitive markov decision chains,'' in {\em
  Operations Research Proceedings}, 2008.

\bibitem{Kadota:06}
Y.~Kadota, M.~Kurano, and M.~Yasuda, ``Discounted markov decision processes
  with utility constraints,'' {\em Comput. Math. Appl.}, vol.~51, 2006.

\end{thebibliography}
}
\end{document}